\theoremstyle{plain}
\newtheorem{thm}{Theorem}[section] % reset theorem numbering for each section
\newtheorem{lem}[thm]{Lemma}
\newtheorem{prop}[thm]{Proposition}
\newtheorem{cor}[thm]{Corollary}
\theoremstyle{definition}
\newtheorem{defn}[thm]{Definition} % definition numbers are dependent on theorem numbers
\newtheorem{remark}[thm]{Remark}
\definecolor{mygreen}{RGB}{28,172,0} % color values Red, Green, Blue
\definecolor{mylilas}{RGB}{170,55,241}
\definecolor{mygray}{gray}{0.95}
\newcommand{\mscr}[1]{\mathscr{#1}}
\newcommand{\ZZ}{\mathbb{Z}}
\newcommand{\RR}{\mathbb{R}}
\newcommand{\NN}{\mathbb{N}}
\newcommand{\EE}{\mathbb{E}}
\newcommand{\PP}{\mathbb{P}}
\newcommand{\al}{\alpha}
\newcommand{\ga}{\gamma}
\newcommand{\de}{\delta}
\newcommand{\ep}{\epsilon}
\newcommand{\ta}{\theta}
\newcommand{\Ta}{\Theta}
\newcommand{\sa}{\sigma}
\renewcommand{\l}{\left}
\renewcommand{\r}{\right}
\newcommand{\defeq}{\vcentcolon=}
 \newcommand{\iid}{\overset{\text{iid}}{\sim}}
\newcommand{\ul}{\underline}
\newcommand{\ds}{\displaystyle}
\title{Differentially Private Uniformly Most Powerful Tests for Binomial Data}
\author{
  Jordan Awan\\%\thanks{Use footnote for providing further
   % information about author (webpage, alternative
   % address)---\emph{not} for acknowledging funding agencies.} \\
  Department of Statistics\\
  Penn State University\\
  University Park, PA 16802 \\
  \texttt{awan@psu.edu} 
  \And
  Aleksandra Slavkovi\'c\\%\thanks{}\\
  Department of Statistics\\
  Penn State University\\
  University Park, PA 16802 \\
  \texttt{sesa@psu.edu} \\ 
}
%\thanks{}
\date{}
%%%  END MY STUFF

\begin{document}
% \nipsfinalcopy is no longer used

\maketitle

\begin{abstract}
  We derive uniformly most powerful (UMP)  tests for simple and one-sided hypotheses for a population proportion within the framework of Differential Privacy (DP), optimizing finite sample performance. We show that in general,  DP hypothesis tests for exchangeable data can always be expressed as a function of the empirical distribution. Using this structure, we prove a `Neyman-Pearson lemma' for binomial data under DP, where the DP-UMP only depends on the sample sum. Our tests can also be stated as a post-processing of a random variable, whose distribution we coin ``Truncated-Uniform-Laplace'' (Tulap), a generalization of the Staircase and discrete Laplace distributions. %We give an efficient algorithm to compute exact $p$-values for these tests.
Furthermore, we obtain exact $p$-values, which are easily computed in terms of the Tulap random variable.
  We show that our results also apply to distribution-free hypothesis tests for continuous data. Our simulation results demonstrate that our tests have exact type I error, and are more powerful than current techniques.
\end{abstract}

%%%%%%%%%%%%%%%%%%%%%%%%%%%%%%%%%%%%%%%%%%%%%%%%%%%%%%%%%%%%
%%% INTRO
%%%%%%%%%%%%%%%%%%%%%%%%%%%%%%%%%%%%%%%%%%%%%%%%%%%%%%%%%%%%
\section{Introduction}\label{Introduction}

Differential Privacy (DP), introduced by \citet{Dwork2006:Sensitivity}, offers a rigorous measure of disclosure risk. To satisfy DP, a procedure cannot be a deterministic function of the sensitive data, but must incorporate additional randomness, beyond sampling. Subject to the DP constraint, it is natural to search for a procedure which maximizes the utility of the output. Many works address the goal of minimizing the distance between the output of the  randomized DP procedure and standard non-private algorithms, but few attempt to  infer properties about the underlying population (with a few notable exceptions, see related work), which is typically the goal in statistics and scientific research. %However, in statistics and scientific research, typically the goal is to infer properties about the underlying population which generated the sample.
In this paper, we study the setting where each individual contributes a sensitive binary value, and we wish to infer the population proportion via hypothesis tests, subject to DP. In particular, we derive {\em uniformly most powerful} (UMP) tests for simple and one-sided hypotheses, optimizing finite sample performance.

UMP tests are fundamental to classical statistics, being closely linked to  sufficiency, likelihood inference, and optimal confidence sets. However, finding UMP tests can be hard and in many cases they do not even exist (see \citet[Section 4.4]{Schervish1996}). Our results are the first to achieve  UMP tests under $(\epsilon,\delta)-$DP, and may be the first step towards a general theory of optimal inference under DP.

{\bfseries \large  Related work} \citet{Vu2009} were among the first to perform hypothesis tests under DP. They developed private tests for population proportions as well as for independence in $2\times 2$ contingency tables. In both settings, they fix the noise adding distribution, and use approximate sampling distributions to perform these DP tests. \citet{Gaboardi2016} and \citet{Wang2015:Revisiting} build on \citet{Vu2009}, developing additional tests for multinomial data. \citet{Wang2015:Revisiting} develop asymptotic sampling distributions for their tests, verifying via simulations that the type I errors are reliable. \citet{Gaboardi2016} on the other hand, use simulations to compute an empirical type I error. \citet{Uhler2013} develop DP chi-squared tests and $p$-values for GWAS data, and derive the exact sampling distribution of their noisy statistic. \citet{Gaboardi2018} work under ``Local Differential Privacy'', a stronger notion of privacy than DP, and develop multinomial tests based on asymptotic distributions. Given a DP output, \citet{Sheffet2017} and \citet{Barrientos2017} develop significance tests for regression coefficients. 

% \citet{Karwa2016:Inference} develop maximum likelihood estimates for network models, given noisy DP sufficient statistics, while \citet{Karwa2015:PrivatePD} take a Bayesian perspective, exploring variational approximations of posterior distributions.%, given a DP output. %\citet{Karwa2016:Inference} develop maximum likelihood estimates for network models, given noisy sufficient statistics.

Outside the hypothesis testing setting, there has been some work on optimal population inference under DP. \citet{Duchi2017} give general techniques to derive minimax rates under local DP, and in particular give minimax optimal point estimates for the mean, median, generalized linear models, and nonparametric density estimation. \citet{Karwa2017} develop nearly optimal confidence intervals for normally distributed data with finite sample guarantees, which could potentially be inverted to give UMP-unbiased tests.

Related work on developing optimal DP mechanisms for general loss functions such as \citet{Geng2016} and \citet{Ghosh2009}, give mechanisms that optimize  symmetric convex loss functions, centered at a real statistic.
\citet{Awan2018:Structure} derive the optimal mechanism in a similar sense, among the class of $K$-Norm Mechanisms.
%There have also been some works to develop optimal DP mechanisms for general loss functions. \citet{Geng2016} and \citet{Ghosh2009} give mechanisms that optimize  symmetric convex loss functions, centered at a real statistic.
%\citet{Awan2018:Structure} derive the optimal mechanism in a similar sense, among the $K$-Norm Mechanisms.

{\bfseries \large Our contributions}
The previous literature on DP hypothesis testing has a few characteristics in common: 1) nearly all of these works first add noise to the data, and perform their test as a post-processing procedure, 2) all of the hypothesis tests use either asymptotic distributions or simulations to derive approximate decision rules, and 3) while each procedure is derived intuitively based on classical theory, none show that they are optimal among all possible DP algorithms.

In contrast, in this paper we search over all DP hypothesis tests at level $\al$, deriving the \emph{uniformly most powerful} (UMP) test. %We find that our DP-UMP test can be stated as a post-processing of a noisy statistic, which allows us to efficiently compute exact $p$-values. 
As in \citet{Vu2009}, we focus on the problem of testing a population proportion under DP. However, rather than fixing the noise distribution, we search over all  DP  tests at level $\al$ to find the UMP test. In Theorem \ref{SuffStatThm}, we show that for exchangeable data, DP tests need only depend on the empirical distribution. We use this structure to find UMP tests for simple hypotheses in Theorems
\ref{UMP1} and \ref{UMP2}, and extend these results to obtain one-sided UMP tests in Corollary \ref{OneSide1}. These  tests are closely tied to our proposed ``\emph{Truncated-Uniform-Laplace}'' (\emph{Tulap}) distribution, which  extends both the discrete Laplace distribution (studied in \citet{Ghosh2009}), and the Staircase distribution of \citet{Geng2016} to the setting of $(\ep,\de)$-DP. We prove that the Tulap distribution satisfies $(\ep,\de)$-DP in Theorem \ref{TulapDP}. We show that our UMP tests can be stated as a post-processing of a Tulap random variable, from which we obtain exact $p$-values via Theorem \ref{PValueResult} and Algorithm \ref{PValueAlgorithm}. 
In Section \ref{DistributionFree}, we show that our results apply to distribution-free hypothesis tests of continuous data. In Section \ref{Simulations}, we verify through simulations that our UMP tests have exact type I error, and are more powerful than current techniques.

%%%%%%%%%%%%%%%%%%%%%%%%%%%%%%%%%%%%%%%%%%%%%%%%%%%%%%%%%%%%
%%%   BACKGROUND
%%%%%%%%%%%%%%%%%%%%%%%%%%%%%%%%%%%%%%%%%%%%%%%%%%%%%%%%%%%%
\section{Background and notation}\label{Background}
Let $\mscr X$ be any set. The $n$-fold cartesian product of $\mscr X$ is $\mscr X^n = \{(X_1,X_2,\ldots, X_n) \mid x_i \in \mscr X\}$. The \emph{Hamming distance} metric on $\mscr X^n$ is $\de: \mscr X^n \times \mscr X^n \rightarrow \ZZ^{\geq 0}$, defined by $\de(X,X') =\# \{i \mid X_i\neq X'_i\}$. %In other words, the Hamming distance counts the number of entries that differ between $X$ and $X'$.

Differential Privacy, introduced by \citet{Dwork2006:Sensitivity}, provides a formal measure of disclosure risk. The notion of DP that we give in Definition \ref{DPDefn} more closely resembles the formulation given by \citet{Wasserman2010:StatisticalFDP} which uses the language of distributions rather than random mechanisms.
\begin{defn}
  [Differential Privacy: \citet{Dwork2006:Sensitivity,Wasserman2010:StatisticalFDP}]\label{DPDefn}
Let $\ep>0$, $\de\geq 0$, and $n\in \{1,2,\ldots\}$ be given. Let $\mscr X$ be any set, and $(\mscr Y, \mscr F)$ be a measurable set. Let $\mscr P = \{P_x\mid x\in \mscr X^n\}$ be a set of probability measures on $(\mscr Y, \mscr F)$. We say that $\mscr P$ satisfies \emph{$(\ep, \de)$-Differential Privacy} ($(\ep,\de)$ - DP) if for all $B\in \mscr F$ and all $X,X' \in \mscr X^n$ such that $\de(X,X')=1$, we have $P_X(B) \leq e^\ep P_{X'}(B) + \de.$
\end{defn}
In Definition \ref{DPDefn}, $\mscr X$ is the set of possible values that one individual can contribute to the database. Intuitively, if a set of distributions satisfies $(\ep,\de)$-DP for small values of $\ep$ and $\de$, then if one person's data is changed in the database, the distribution does not change much. We call $\ep$ the privacy budget, ideally a small value less than $1$, and $\de$ typically $\ll \frac 1n$ allows us to disregard events which have small probability. We refer to $(\ep, 0)$-DP as pure DP, and $(\ep, \de)$-DP as approximate DP.

Proposition \ref{PostProcessing} states that if the random variables $\{Z| X\mid X\in \mscr X^n\}$ satisfy $(\ep,\de)$-DP, then applying any function to $Z| X$  also satisfies $(\ep,\de)$-DP; see \citet{Dwork2014:AFD} for a proof.

\begin{prop}[Post-processing]\label{PostProcessing}
Let $\ep>0$, $\de\geq 0$, $\mscr X$ be any set, and $\mscr P = \{P_x \mid x\in \mscr X^n\}$ be a set of probability measures on $(\mscr Y, \mscr F)$ which satisfies $(\ep, \de)$-DP. Let $f: \mscr Y \rightarrow \mscr Z$ be some measurable function. Then $\mscr P\circ f^{-1} = \{P_x\circ f^{-1} \mid x\in \mscr X^n\}$ satisfies $(\ep, \de)$-DP.
\end{prop}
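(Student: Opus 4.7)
The plan is to verify the definition of $(\ep,\de)$-DP directly for the pushforward family $\mscr P \circ f^{-1}$, reducing everything to an application of the DP property of $\mscr P$ on a measurable preimage. The argument is essentially definition-chasing, so I do not expect any real obstacle; the only conceptual point is the standard fact that a measurable function pulls measurable sets back to measurable sets.

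First, I would fix an implicit target $\sigma$-algebra $\mscr G$ on $\mscr Z$ with respect to which $f: (\mscr Y, \mscr F) \to (\mscr Z, \mscr G)$ is measurable. For any $C \in \mscr G$, the measurability of $f$ gives $B := f^{-1}(C) \in \mscr F$. Then I would next fix any $X, X' \in \mscr X^n$ with Hamming distance $\de(X,X') = 1$. By the definition of the pushforward measure, one has
\begin{equation*}
(P_X \circ f^{-1})(C) = P_X(f^{-1}(C)) = P_X(B),
\end{equation*}
and similarly $(P_{X'} \circ f^{-1})(C) = P_{X'}(B)$.

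Now I would invoke the hypothesis that $\mscr P$ satisfies $(\ep,\de)$-DP, applied to the set $B \in \mscr F$ and the neighboring databases $X, X'$, which yields $P_X(B) \leq e^\ep P_{X'}(B) + \de$. Substituting the pushforward identities from the previous step gives $(P_X \circ f^{-1})(C) \leq e^\ep (P_{X'} \circ f^{-1})(C) + \de$. Since $C \in \mscr G$ and the neighboring pair $X, X'$ were arbitrary, this is exactly the statement that $\mscr P \circ f^{-1}$ satisfies $(\ep,\de)$-DP. The main (and essentially only) obstacle is merely notational: keeping straight the distinction between the original measure $P_X$ on $(\mscr Y, \mscr F)$ and its pushforward $P_X \circ f^{-1}$ on $(\mscr Z, \mscr G)$, and ensuring the measurability hypothesis on $f$ is explicitly invoked when passing from $C$ to $B = f^{-1}(C)$.
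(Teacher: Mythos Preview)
Your argument is correct and complete: pulling back a measurable set $C$ along $f$ to $B=f^{-1}(C)\in\mscr F$ and applying the DP inequality for $\mscr P$ on $B$ is exactly the standard proof. Note that the paper itself does not give a proof of this proposition at all; it simply states the result and refers the reader to \citet{Dwork2014:AFD}, so there is nothing further to compare.
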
 

The focus of this paper is to find uniformly most powerful (UMP) hypothesis tests, subject to DP. As the output of a DP method is necessarily a random variable, we work with randomized hypothesis tests, which we review in Definition \ref{HT}. Our notation follows that of \citet[Chapter 4]{Schervish1996}.

\begin{defn}
  [Hypothesis Test]\label{HT}
  Let $(X_1,\ldots, X_n)\in \mscr X^n$ be distributed $X_i \iid P_\ta$%iid according to the probability measure $P_\ta$
  , where $\ta\in \Ta$. Let $\Ta_0, \Ta_1$ be a partition of $\Ta$. A \emph{(randomized) test} of $H_0: \ta \in \Ta_0$ versus $H_1: \ta\in \Ta_1$ is a measurable function $\phi: \mscr X^n \rightarrow [0,1]$. We say a test $\phi$ is at \emph{level} $\al$ if $\sup_{\ta\in \Ta_0} \EE_{P_\ta} \phi \leq \al$, and at \emph{size} $\al$ if $\sup_{\ta\in \Ta_0} \EE_{P_\ta} \phi = \al$. The \emph{power} of $\phi$ at $\ta$ is denoted $\beta_\phi(\ta) = \EE_{P_\ta}\phi$. 
%A test $\phi$ is \emph{unbiased} if $\beta_\phi(\ta_0)\geq \beta_\phi(\ta_1)$ for all $\ta_0\in \Ta_0$ and $\ta_1\in \Ta_1$.

Let $\Phi$ be a set of tests for $H_0: \ta \in \Ta_0$ versus $H_1:\ta\in \Ta_1$. We say that $\phi^*\in \Phi$ is the \emph{uniformly most powerful} (UMP) test among $\Phi$ at level $\al$ if $\sup_{\ta\in \Ta_0}\beta_{\phi^*}(\ta)\leq \al$ and for any $\phi \in \Phi$ such that $\sup_{\ta\in \Ta_0} \beta_{\phi}(\ta)\leq \al$ we have 
$\beta_{\phi^*}(\ta) \geq \beta_{\phi}(\ta)$, for all $\ta\in \Ta_1$.
\end{defn}

Typically, we use capital letters to denote random variables and lowercase letters for particular values. For a random variable $X$, we denote $F_X$ as its cumulative distribution function (cdf), and either $f_X$ as its density or $p_X$ as its point mass function (pmf), depending on whether $X$ is continuous or discrete.

Finally, we use the \emph{nearest integer function} $[\cdot]:\RR\rightarrow \ZZ$ extensively. For any real number $t\in \RR$, $[t]$ is defined to be the integer nearest to $a$. To ensure that this function is well defined, we take $[z+1/2]$ to be the nearest even integer when $z\in \ZZ$. Given this definition, $[-t]=t$ for all $t\in \RR$.

%%%%%%%%%%%%%%%%%%%%%%%%%%%%%%%%%%%%%%%%%%%%%%%%%%%%%%%%%%%%
%%%   SETUP
%%%%%%%%%%%%%%%%%%%%%%%%%%%%%%%%%%%%%%%%%%%%%%%%%%%%%%%%%%%%
\section{Exchangeability condition}\label{Setup}
Let $(\mscr X, \mscr B)$ be a measurable space, and $\ul X \in \mscr X^n$. Assume that $\ul X$ has an \emph{exchangeable} distribution $\PP$. 
Denote by $\PP_n$ the empirical distribution of $\ul X$, defined by $\PP_n(B) = \frac 1n \sum_{i=1}^n I_B(X_i)$ for all $B\in \mscr X$. Note that $\PP_n$ determines $\ul X$ up to permutations. By De Finetti's Theorem (see \citet[Theorem 1.48]{Schervish1996}), $\PP_n$ is a sufficient statistic for $\PP$, and from classical statistical theory (see \citet[Section 2.1.2]{Schervish1996}, we know that any hypothesis test about $\PP$ need only depend a sufficient statistic.

In Theorem \ref{SuffStatThm}, we show that the same result holds for hypothesis tests under DP as well. Let $\phi: \mscr X^n \rightarrow [0,1]$ be any test. Interpreting $\phi_{\ul X}$ as the probability of `Reject', the test $\phi$ satisfies $(\ep, \de)$-DP if and only if for all $\ul X,\ul X'\in \{0,1\}^n$ such that $\de(\ul X,\ul X')=1$,
\begin{equation}
  \label{DPBinary}
\phi_{\ul X} \leq e^\ep \phi_{\ul X'} + \de\quad \text{and} \quad (1- \phi_{\ul X}) \leq e^\ep (1-\phi_{\ul X'}) + \de.
  \end{equation}
  \begin{thm}
    \label{SuffStatThm}
Let $\{\mu_i\}_{i\in I}$ be a set of  exchangeable  distributions on $(\mscr X^n,\mscr B)$. Let $\phi: \mscr X^n \rightarrow [0,1]$ satisfy \eqref{DPBinary}. Define $\phi': \mscr X^n \rightarrow [0,1]$ by $\phi'_{\ul X} = \frac{1}{n!} \sum_{\pi \in \sa(n)} \phi_{\pi(\ul X)}$, where $\sa(n)$ is the symmetric group on $n$ letters. Then $\phi'$ satisfies \eqref{DPBinary}, $ \int \phi'_{\ul X} \ d\mu_i = \int \phi_{\ul X} \ d\mu_i$  $\forall i\in I$, and $\phi'$ only depends on $\PP_n$.
\end{thm}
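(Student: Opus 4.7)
The theorem bundles three claims about the symmetrized test $\phi'$: permutation invariance (hence dependence only on $\PP_n$), expectation preservation under every exchangeable $\mu_i$, and preservation of the DP condition \eqref{DPBinary}. My plan is to handle the three in order, since each is a short averaging argument, and then finish with a brief remark identifying $\PP_n$ with the orbit space of $\sa(n)$ acting on $\mscr X^n$.

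For permutation invariance, I would note that for any $\tau \in \sa(n)$,
\[
\phi'_{\tau(\ul X)} = \frac{1}{n!}\sum_{\pi \in \sa(n)} \phi_{\pi\tau(\ul X)} = \frac{1}{n!}\sum_{\sigma\in\sa(n)} \phi_{\sigma(\ul X)} = \phi'_{\ul X},
\]
where in the middle equality I reindex by $\sigma = \pi\tau$, using that right-multiplication by $\tau$ is a bijection on $\sa(n)$. Therefore $\phi'$ is constant on each $\sa(n)$-orbit of $\mscr X^n$, and orbits are exactly level sets of $\ul X \mapsto \PP_n$, giving the claim that $\phi'$ only depends on $\PP_n$.

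For expectation preservation, I would compute
\[
\int \phi'_{\ul X}\, d\mu_i = \frac{1}{n!}\sum_{\pi \in \sa(n)} \int \phi_{\pi(\ul X)}\, d\mu_i
\]
by linearity (and boundedness of $\phi \in [0,1]$, so Fubini is automatic). Then exchangeability of $\mu_i$ means the pushforward of $\mu_i$ under $\pi$ equals $\mu_i$, so $\int \phi_{\pi(\ul X)}\, d\mu_i = \int \phi_{\ul X}\, d\mu_i$ for every $\pi$, and the average collapses to $\int \phi\, d\mu_i$.

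For preservation of \eqref{DPBinary}, the key observation is that the Hamming distance is invariant under applying the same permutation to both arguments: if $\de(\ul X,\ul X')=1$, then $\de(\pi(\ul X),\pi(\ul X'))=1$ for every $\pi\in\sa(n)$. Hence \eqref{DPBinary} applied to the pair $\bigl(\pi(\ul X),\pi(\ul X')\bigr)$ gives $\phi_{\pi(\ul X)} \le e^\ep \phi_{\pi(\ul X')} + \de$ and $1-\phi_{\pi(\ul X)} \le e^\ep(1-\phi_{\pi(\ul X')}) + \de$. Averaging both inequalities over $\pi \in \sa(n)$ yields $\phi'_{\ul X} \le e^\ep \phi'_{\ul X'} + \de$ and $1-\phi'_{\ul X} \le e^\ep(1-\phi'_{\ul X'})+\de$, which is \eqref{DPBinary} for $\phi'$. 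There is no real obstacle here; the only thing to be careful about is to use the \emph{same} $\pi$ on both sides so that the pair stays at Hamming distance one, which is exactly what the definition of $\phi'$ does.
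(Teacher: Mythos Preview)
Your proof is correct and follows essentially the same approach as the paper: both argue that exchangeability gives $\int \phi_{\pi(\ul X)}\,d\mu_i = \int \phi\,d\mu_i$, and both verify \eqref{DPBinary} by noting each $\phi_{\pi(\cdot)}$ satisfies it (equivalently, your observation that $\de(\pi(\ul X),\pi(\ul X'))=1$) and then averaging. You are slightly more explicit than the paper in spelling out the reindexing that shows $\phi'$ is permutation-invariant and hence depends only on $\PP_n$, which the paper leaves implicit.
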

\begin{proof}[Proof.]
For any $\pi \in \sa(n)$, $\phi_{\pi(\ul X)}$ satisfies $(\ep,\de)$-DP. By exchangeability, $\int \phi_{\pi(\ul X)}\ d\mu_i = \int \phi_{\ul X}\ d\mu_i$. Since condition \ref{DPBinary} is closed under convex combinations, and integrals are linear, the result follows.
\end{proof}
Theorem \ref{SuffStatThm} says that for any DP hypothesis tests, we can construct another test with the same power at every $\mu_i$, which only depends on $\PP_n$.

The particular problem we study is as follows. Let $\ul X \in \{0,1\}^n$, where  $X_i$ is the sensitive data of individual $i$, and assume that $\ul X$ is exchangeable. Then by De Finetti's Theorem (see \citet[Theorem 1.47]{Schervish1996}), there exists $\ta$ such that $X_i | \ta \iid \mathrm{Bern}(\ta)$, and hence $X|\ta \sim \mathrm{Binom}(n,\ta)$, where $X = \sum \ul X$. By Theorem \ref{SuffStatThm}, we can restrict our attention to tests which are functions of $X$. A test $\phi:\{0,1,\ldots, n\}\rightarrow[0,1]$ satisfies $(\ep,\de)$ -DP  if and only if for all $x\in \{1,2,3,\ldots, n\}$,
\begin{align}
  \phi_x &\leq e^\ep \phi_{x+1}+\de\label{DP1}\\
\phi_{x+1}&\leq e^\ep \phi_{x}+\de\label{DP2}\\
(1-\phi_x)&\leq e^\ep(1-\phi_{x+1})+\de\label{DP3}\\
(1-\phi_{x+1})&\leq e^\ep(1-\phi_x)+\de\label{DP4}.
\end{align}
We denote the set of all tests which satisfy \eqref{DP1}-\eqref{DP4} as  
$\mscr D^n_{\ep,\de} =\big\{\phi:\phi \text{ satisfies \eqref{DP1}-\eqref{DP4}}\big\}.$

%%%%%%%%%%%%%%%%%%%%%%%%%%%%%%%%%%%%%%%%%%%%%%%%%%%%%%%%%%%%
%%%   Tulap Distribution
%%%%%%%%%%%%%%%%%%%%%%%%%%%%%%%%%%%%%%%%%%%%%%%%%%%%%%%%%%%%
\section{The Tulap distribution}
All of our results are related to the proposed \emph{Truncated-Uniform-Laplace} (Tulap) distribution, defined in Definition \ref{TulapDefn}, which we motivate as follows. \citet{Geng2016} show that for general loss functions, adding $L\sim \mathrm{DLap}(e^{-\ep})$ to $X$ is optimal  under $(\ep,0)$-DP. We suspect that post-processing $X+L$ may give optimal inference for $(\ep,0)$-DP. However, we know by classical UMP theory that a randomized test is required, since $X+L$ is discrete. Alternatively, we add a random variable $U\sim \mathrm{Unif}(-1/2,1/2)$ to obtain a continuous distribution. We call the distribution of $(X+L+U)\mid {X}$ as $\mathrm{Tulap}(X,b,0)$. $\mathrm{Tulap}(X,b,q)$ is obtained by truncating within the central $(1-q)^{th}$-quantiles of $\mathrm{Tulap}(X,b,0)$.

\begin{defn}\label{TulapDefn}
  Let $N$ and $N_0$ be real-valued random variables. Let $m\in \RR$, $b\in(0,1)$ and $q\in [0,1)$. We say that $N_0 \sim \mathrm{Tulap}(m,b,0)$ and $N\sim \mathrm{Tulap}(m,b,q)$ if $N_0$ and $N$ have the following cdfs:
\[F_{N_0}(x) = \begin{cases}
\frac{b^{-[x-m]}}{1+b} \l(b+(x-m-[x-m]+\frac 12)(1-b)\r)&x\leq [m]\\
1-\frac{b^{[x-m]}}{1+b}\l(b+ ([x-m]-(x-m)+\frac 12 ) (1-b)\r) &x>[m],
\end{cases}\]
and $F_N(x) = \l(\frac{F_{N_0}(x) - \frac q2}{1-q}\r)I\{\frac q2\leq F_{N_0}(x) \leq 1-\frac q2\} + I\{F_{N_0}(x)>1-\frac q2\}$.
\end{defn}
While the cdf in Definition \ref{TulapDefn} is complicated, Algorithm \ref{SampleTulap} gives a simple method of sampling a random Tulap variable, which we show to be correct in Lemma \ref{TulapLem}.
\begin{algorithm}
\caption{Sample from Tulap distribution}
\scriptsize
INPUT: $m\in \RR$, $b\in (0,1)$, $q\in [0,1)$.
\begin{algorithmic}[1]
  \setlength\itemsep{0em}
  \STATE Draw $G_1,G_2 \iid \mathrm{Geom}(1-b)$ and $U\sim \mathrm{Unif}(-1/2,1/2)$
\STATE Set $N=G_1-G_2+U+m$
\STATE If $F_{N_0}(N)<q/2$ or $F_{N_0}(N)>1-q/2$, where $N_0\sim \mathrm{Tulap}(m,b,0)$,
go to 1:
\end{algorithmic}
OUTPUT: $N$
\label{SampleTulap}
\end{algorithm}
\begin{lem}\label{TulapLem}
  \begin{enumerate}
  \item Let $L\sim \mathrm{DLap}(b)$, $U\sim \mathrm{Unif}(-1/2,1/2)$, $G_1,G_2\iid \mathrm{Geom}(1-b)$, and $N_0\sim \mathrm{Tulap}(m,b,0)$, where the pmf of $L$ is $p_L(x) = \frac{1-b}{1+b}b^{|x|}$ for $x\in \ZZ$, and the pmf of $G_1$ is $p_{G_1}(x)=(1-p)^xp$ for $x\in \{0,1,2,\ldots\}$. Then $L+U+m\overset d= G_1-G_2+U+m\overset d= N_0$.
\item Let $N$ be the output of Algorithm \ref{SampleTulap} with inputs $m,b,q$. Then $N\sim \mathrm{Tulap}(m,b,q)$
\item The random variable $N\sim \mathrm{Tulap}(m,b,q)$ is continuous and symmetric about $m$.
  \end{enumerate}
\end{lem}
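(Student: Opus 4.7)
\textbf{Proof plan for Lemma \ref{TulapLem}.}

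\textbf{Part 1.} The plan is to handle the two distributional equalities in sequence. First I would verify $G_1 - G_2 \sim \mathrm{DLap}(b)$ by a direct pmf convolution: for $k \geq 0$ and $p = 1-b$, one computes
\[
\PP(G_1 - G_2 = k) = \sum_{j=0}^{\infty} p(1-p)^{k+j}\cdot p(1-p)^j = \frac{p}{2-p}\,b^k = \frac{1-b}{1+b}\,b^k,
\]
and the case $k<0$ follows from symmetry of the joint distribution of $(G_1,G_2)$. This gives $L \overset{d}{=} G_1 - G_2$, hence $L + U + m \overset{d}{=} G_1 - G_2 + U + m$. Next, to show $L+U+m \overset{d}{=} N_0$, I would compute the cdf of $L+U+m$ by conditioning on $L$: for $x \in \RR$, writing $y = x - m$ and $k = [y]$, only the two values $L = k$ and $L = k \pm 1$ contribute to $\PP(L+U \leq y, L \in \{k-1,k,k+1\})$ (plus the whole tail from $L < k$ or $L > k$ entering fully). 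Summing the geometric tails yields a closed-form equal to the piecewise expression for $F_{N_0}$ in Definition \ref{TulapDefn}, split according to the sign of $[y]$. The main obstacle is this bookkeeping: one must carefully track the contribution of the partial mass from $L=[y]$ (the term $(y - [y] + 1/2)(1-b)$) and sum the remaining geometric series $\sum_{j} b^{|j|}$ to match the factor $b^{-[y]}/(1+b)$ or $b^{[y]}/(1+b)$.

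\textbf{Part 2.} Given Part 1, $N_0 \sim \mathrm{Tulap}(m,b,0)$ is continuous, so $F_{N_0}(N_0) \sim \mathrm{Unif}(0,1)$. Algorithm \ref{SampleTulap} generates $N_0 = G_1 - G_2 + U + m$ and performs rejection sampling to condition on the event $A = \{q/2 \leq F_{N_0}(N_0) \leq 1 - q/2\}$, which has probability $1-q$. I would then write
\[
\PP(N \leq x) = \frac{\PP(N_0 \leq x,\ A)}{1-q},
\]
and split into three cases according to whether $F_{N_0}(x) < q/2$, $q/2 \leq F_{N_0}(x) \leq 1-q/2$, or $F_{N_0}(x) > 1-q/2$. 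In each case the numerator reduces directly, yielding the piecewise cdf $F_N$ in Definition \ref{TulapDefn}.

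\textbf{Part 3.} Continuity of $N_0$ is immediate from the convolution with $U$: for any $x$, $\PP(N_0 = x) \leq \PP(U = x - m - L) = 0$ since $U$ is absolutely continuous. Continuity of $N$ follows because truncation preserves continuity. For symmetry about $m$, I would note that $G_1 - G_2 \overset{d}{=} G_2 - G_1$ and $U \overset{d}{=} -U$ independently, so $N_0 - m \overset{d}{=} -(N_0 - m)$; equivalently, $F_{N_0}(m - t) = 1 - F_{N_0}(m+t)$ for all $t$. This implies the acceptance event $A$ is invariant under the reflection $N_0 \mapsto 2m - N_0$, so the truncated variable $N$ is also symmetric about $m$.

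Overall the only technically delicate step is the cdf verification in Part 1; Parts 2 and 3 are short consequences of Part 1 combined with standard rejection-sampling and symmetry arguments.
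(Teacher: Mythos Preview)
Your proposal is correct and follows essentially the same route as the paper: compute the cdf of $L+U$ by convolution/integration (the paper cites \citet{Inusah2006} for $L\overset d= G_1-G_2$ rather than computing it, but your direct pmf sum is fine), then invoke rejection sampling for Part~2 and the symmetry of $L$, $U$, and the two-sided truncation for Part~3. The only minor wobble is your remark that ``$L=k$ and $L=k\pm1$'' contribute partially --- in fact only $L=[y]$ gives a partial contribution and the rest is a full geometric tail --- but you correct this in the next sentence, and the paper's integration argument is exactly the clean version of that bookkeeping.
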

\begin{proof}
  It is well known that $L\overset d= G_1-G_2$ (see \citet{Inusah2006}). To derive $F_{N_0}$, we compute the convolution of $L$ and $U$ to get  $f_{L+U}(x) = f_U(x-[x]) p_L([x])$. Integrating gives $F_{L+U}$.
 If $q=0$, it is clear that Algorithm \ref{SampleTulap} is correct. If $q>0$, then by rejection sampling, we have that $N\sim \mathrm{Tulap}(m,b,q)$ (see \citet[Chapter 11]{Bishop2006} for an introduction to rejection sampling).
Property 3 is clear by inspection.
\end{proof}

In Section \ref{pValues}, we find that our UMP tests can be achieved by post-processing the random variable $Z|X\sim \mathrm{Tulap}(X,b=e^{-\ep},q=\frac{2\de b}{1-b-2\de b})$. Theorem  \ref{TulapDP} tells us that we can release $Z| X$ at $(\ep,\de)$-DP, and all of our UMP tests and $p$-values can be computed by post-processing $Z$.

\begin{thm}\label{TulapDP}
Let $\mscr X$ be any set, and $T:\mscr X^n\rightarrow \ZZ$, with $\Delta(T) = \sup |T(X)-T(X')|=1$, where the supremum is over the set $\{(X,X')\in \mscr X^n\times \mscr X^n\mid \de(X,X')=1\}$. Then the set of distributions $\l\{\mathrm{Tulap}\l(T(X),b=e^{-\ep}, \frac{2\de b}{1-b+2\de b}\r)\middle| X\in\mscr X^n\r\}$ satisfies $(\ep,\de)$-DP.
\end{thm}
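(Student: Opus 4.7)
The plan is to reduce the $(\ep,\de)$-DP condition to comparing two Tulap densities centered at adjacent integers, then partition the sample space into an overlap region (handled by a pointwise density-ratio bound that gives $e^\ep$) and a length-one tail sliver (handled by a direct cdf calculation that gets calibrated to exactly $\de$ by the chosen $q$). By post-processing (Proposition \ref{PostProcessing}) and $\Delta(T)=1$, it suffices to show that for any integer $m$, the two distributions $\mathrm{Tulap}(m,b,q)$ and $\mathrm{Tulap}(m+1,b,q)$ satisfy $P_m(B) \leq e^\ep P_{m+1}(B)+\de$ for every measurable $B$; the case $T(X)=T(X')$ is trivial, and $T(X')=T(X)-1$ follows by swapping roles. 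By Lemma \ref{TulapLem}, the density $f^{(m)}$ of $\mathrm{Tulap}(m,b,q)$ is supported on a symmetric interval $[m-r,m+r]$, with $r$ determined by $F_{N_0^{(m)}}(m-r)=q/2$, and equals $h(x-m)/(1-q)$ on this support, where $h(y) = \frac{1-b}{1+b}b^{|[y]|}$ is the density of the centered untruncated $N_0$.

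For any measurable $B$, I would partition $B\cap[m-r,m+r] = B_1\cup B_2$ with $B_1 = B\cap[m-r,m-r+1)$ and $B_2 = B\cap[m-r+1,m+r]$. On $B_2$ both densities are positive, and the pointwise ratio is $f^{(m)}(x)/f^{(m+1)}(x) = b^{|[x-m]|-|[x-m-1]|}$. A short case split on the sign of $[x-m]$ shows the exponent is $\pm 1$, so the ratio is at most $b^{-1}=e^\ep$, giving $\int_{B_2} f^{(m)} \leq e^\ep P_{m+1}(B)$. Thus the multiplicative part of the DP inequality is absorbed entirely by the overlap piece.

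For the tail, $\int_{B_1} f^{(m)} \leq F_{N^{(m)}}(m-r+1) = (F_{N_0^{(m)}}(m-r+1)-q/2)/(1-q)$, so the task reduces to evaluating $F_{N_0^{(m)}}(m-r+1)$ using Definition \ref{TulapDefn}. Writing $[-r]=-K$ and $\theta = -r+K+1/2 \in [0,1)$, and substituting $x=m-r$ and $x=m-r+1$ into the first branch of the cdf, the two expressions share the same polynomial factor $b+\theta(1-b)$ but carry leading powers $b^K$ versus $b^{K-1}$; hence $F_{N_0^{(m)}}(m-r+1) = F_{N_0^{(m)}}(m-r)/b = q/(2b)$, and plugging $q = 2\de b/(1-b+2\de b)$ into $(q(1-b))/(2b(1-q))$ collapses the tail bound to exactly $\de$. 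I expect the main technical obstacle to be the boundary case where $r<1$, which forces use of the second branch of the cdf and requires either a parallel computation or an argument that the regime of interest (small $\de$, hence small $q$) guarantees $r\geq 1$; once past this, the rest is the one-line cdf algebra above together with routine support bookkeeping.
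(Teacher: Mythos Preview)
Your approach is correct and genuinely different from the paper's. The paper does not touch densities at all: it first observes that the Tulap family has a monotone likelihood ratio in the center $T(X)$, then invokes Lemma~\ref{CDFLem} (a Neyman--Pearson reduction) to conclude that verifying $(\ep,\de)$-DP over all measurable sets collapses to verifying it on half-lines $(-\infty,t)$ and $(t,\infty)$; those half-line inequalities are precisely conditions \eqref{DP1}--\eqref{DP4} for the map $x\mapsto F_N(t-x)$, which Lemma~\ref{RecurrenceLem2} has already shown to hold. Your route is more elementary and self-contained---no MLR, no Neyman--Pearson, just a pointwise density-ratio bound on the overlap and a direct cdf evaluation on the sliver---while the paper's route recycles the recurrence machinery built for the UMP theorems and handles every $\de\geq 0$ in one stroke. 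On your flagged boundary case: $r\geq 1$ is equivalent to $q\leq b$ (since $F_{N_0}(-1)=b/2$), and substituting $q=2\de b/(1-b+2\de b)$ this becomes $\de\leq 1/2$; DP is vacuous for $\de\geq 1$, so the only residual gap is $\de\in(1/2,1)$, which you can close with a parallel computation in the second cdf branch or simply exclude as outside the regime of interest. One minor quibble: post-processing is not actually what reduces the problem to adjacent integer centers---that follows directly from $\Delta(T)=1$ and the definition of DP.
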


The proof of Theorem  \ref{TulapDP} follows from Lemmas \ref{RecurrenceLem1}, \ref{RecurrenceLem2}, and \ref{CDFLem}, and  is deferred to Section \ref{pValues}.

\begin{remark}
  $\mathrm{Tulap}(0,b,0) \overset d = \mathrm{Staircase}(b,1/2)$ and $[\mathrm{Tulap}(0,b,0)] \overset d= \mathrm{DLap}(b)$, where $\mathrm{Staircase}(b,\ga)$ is the distribution in \citet{Geng2016}. %Then $\mathrm{Tulap}(0,b,0) \overset d = \mathrm{Staircase}(b,1/2)$ and $[\mathrm{Tulap}(0,b,0)] \overset d= \mathrm{DLap}(b)$.
\citet{Geng2016} show that for real valued statistics $T$ and convex symmetric loss functions centered at $T$, the optimal noise distribution for $\ep$-DP is $\mathrm{Staircase}(b,\ga)$ for $b=e^{-\ep}$ and some $\ga\in (0,1)$. If the statistic is a count, then \citet{Ghosh2009} show that $\mathrm{DLap}(b)$ is optimal. Our results agree with these works when $\de=0$, and extend them to the case of arbitrary $\de$.
\end{remark}

%%%%%%%%%%%%%%%%%%%%%%%%%%%%%%%%%%%%%%%%%%%%%%%%%%%%%%%%%%%%
%%%     UMP  Delta=0
%%%%%%%%%%%%%%%%%%%%%%%%%%%%%%%%%%%%%%%%%%%%%%%%%%%%%%%%%%%%
\section{UMP  tests when $\de=0$}\label{SectionDz}

The goal of this section is as follows: given $n, \ep>0, \al>0, \ta_0<\ta_1$, and $X\sim Binom(n,\ta)$, find the UMP test at level $\al$ among $\mscr D^n_{\ep,0}$ for testing simple hypotheses $H_0:\ta = \ta_0$ versus $H_1: \ta=\ta_1$.

In the classic statistic setting, the UMP for this test is given by the \emph{Neyman-Pearson lemma}, however in the DP framework, our test must satisfy \eqref{DP1}-\eqref{DP4}. Within these constraints, we follow the logic behind the Neyman-Pearson lemma as follows.
Let $\phi_x\in \mscr D_{\ep,0}^n$. Thinking of $\phi_x$ defined recursively, equations \eqref{DP1}-\eqref{DP4} give  upper bounds and  lower bounds for $\phi_x$ in terms of $\phi_{x-1}$. Since $\ta_1>\ta_0$, and binomial distributions have a monotone likelihood ratio (MLR) in $X$,  larger values of $X$ give more evidence for $\ta_1$ over $\ta_0$. Thus,  we expect $\phi_x$ is increasing as much as possible, subject to  \eqref{DP1}-\eqref{DP4}. Lemma \ref{RecurrenceLem1} shows that taking $\phi_x$ to be such a function is equivalent to having $\phi_x$ be the cdf of a Tulap random variable. The properties in Lemma \ref{CLem} are easily verified, so the proof is omitted.

\begin{lem}\label{CLem}
Let $N\sim \mathrm{Tulap}(m,b,q)$ and let $t\in \ZZ$. Then 
$\ds F_{N}(t)= \begin{cases}
b^{-t}C(m)&t\leq [m]\\
1-b^tC(-m)&t>[m],
\end{cases}$,
where $C(m) = (1+b)^{-1} b^{[m]} (b+([m]-m+1/2)(1-b))$.
It is true that $C(m)$ is positive, monotone decreasing, and continuous in $m$. Furthermore, $b^{-[m]}C(m) = 1-b^{[m]}C(-m)$.
\end{lem}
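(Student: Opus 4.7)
This lemma is pure bookkeeping against Definition~\ref{TulapDefn}, and my plan is a direct verification in a few short steps. The key auxiliary fact is that for any integer $t \in \ZZ$ and any $m \in \RR$, we have $[t - m] = t - [m]$; this holds because subtracting $m$ from an integer $t$ merely shifts the nearest-integer operator by the integer $t$, and the even-integer convention at half-integers is preserved by such shifts. I would state this identity up front, together with the corollary $[-m] = -[m]$.

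For the cdf formula, I would substitute $[t - m] = t - [m]$ into the two pieces of $F_{N_0}$ given in Definition~\ref{TulapDefn}. In the case $t \le [m]$, the factor $b^{-[t-m]} = b^{[m]-t}$ separates a $b^{-t}$, and the residual $b + ([m] - m + \frac{1}{2})(1-b)$ regroups with the remaining $b^{[m]}/(1+b)$ into exactly $C(m)$. The case $t > [m]$ is symmetric: using $[-m] = -[m]$, the bracketed residual reorganizes as $b^{-[m]}(b + (m - [m] + \frac{1}{2})(1-b)) = C(-m)/(1+b)^{-1} b^{[m]} \cdot \text{etc.}$, yielding $1 - b^t C(-m)$. (The statement is about the un-truncated $F_{N_0}$; for integer $t$ the truncation in $F_N$ plays no role on the ranges relevant here.)

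For the three structural properties of $C$: positivity is immediate, since the nearest-integer convention gives $[m] - m + \frac{1}{2} \in [0, 1]$, so $b + ([m] - m + \frac{1}{2})(1-b) \in [b, 1] \subset (0, 1]$, and the other factors $b^{[m]}$ and $(1+b)^{-1}$ are strictly positive. For monotone decrease and continuity I would work piecewise: on each open interval where $[m]$ is constant, $C(m)$ is affine in $m$ with slope $-(1+b)^{-1} b^{[m]}(1-b) < 0$, so strictly decreasing. At each half-integer boundary, the left- and right-hand limits both evaluate to $(1+b)^{-1} b^{[m]+1}$ (one side via the factor $b + 0$, the other via $b + (1)(1-b)$), so the pieces glue continuously, independent of which parity the convention assigns at the boundary itself.

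Finally, for the identity $b^{-[m]} C(m) = 1 - b^{[m]} C(-m)$, I would add both sides directly. The outer $b^{\pm [m]}$ factors cancel the internal $b^{[m]}$ and $b^{[-m]} = b^{-[m]}$ in $C(\pm m)$, and the sum collapses to
\[
(1+b)^{-1}\bigl[(b + ([m]-m+\tfrac{1}{2})(1-b)) + (b + (m - [m] + \tfrac{1}{2})(1-b))\bigr] = (1+b)^{-1}(2b + (1-b)) = 1.
\]
The only real care point throughout is the nearest-integer convention at half-integers; beyond that, the argument is routine algebra, which is why the paper omits it.
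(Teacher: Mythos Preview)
Your proposal is correct and mirrors the paper's own (terse) argument: direct verification of the cdf against Definition~\ref{TulapDefn}, a piecewise check of continuity at the half-integer boundaries, and the same algebraic collapse for the identity (the paper abbreviates with $\alpha(m)=[m]-m+1/2$ and $\alpha(-m)=1-\alpha(m)$, which is exactly your computation). One small correction: the even-integer tie-breaking rule is \emph{not} shift-invariant---e.g.\ $[0.5]=0$ but $[1.5]=2\neq[0.5]+1$---so your identity $[t-m]=t-[m]$ can fail when $m$ is a half-integer; the clean fix is to verify the cdf formula for non-half-integer $m$ and then extend to half-integer $m$ by the continuity you already establish.
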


\begin{lem}\label{RecurrenceLem1}
  Let $\ep>0$ be given. Let $\phi:\{0,1,2,\ldots, n\} \rightarrow (0,1)$. The following are equivalent:
  \begin{enumerate}
  \item There exists $m\in (0,1)$ such that $\phi_0=m$ and $\phi_x = \min\{e^\ep \phi_{x-1} , 1-e^{-\ep} (1-\phi_{x-1})\}$ for $x=1,\ldots, n$.
\item There exists $m\in (0,1)$ such that $\phi_0=m$ and for $x=1,\ldots, n$,\\
$\phi_x = \begin{cases}
e^\ep \phi_{x-1} & \phi_{x-1} \leq \frac{1}{1+e^\ep}\\
1-e^{-\ep}(1-\phi_{x-1})&\phi_{x-1}>\frac{1}{1+e^\ep}.
\end{cases}$
\item There exists $m\in \RR$ such that $\phi_x = F_{N_0}(x-m)$ for $x=0,1,2,\ldots, n$, where $N_0\sim \mathrm{Tulap}(0,b=e^{-\ep},0)$.
  \end{enumerate}
\end{lem}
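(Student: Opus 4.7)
The plan is to close the circle (1) $\Rightarrow$ (2) $\Rightarrow$ (3) $\Rightarrow$ (1), though (1) $\Leftrightarrow$ (2) is elementary: a short calculation shows that $e^\ep t \leq 1 - e^{-\ep}(1-t)$ is equivalent to $t \leq 1/(1+e^\ep)$ after factoring $e^\ep - e^{-\ep} = e^{-\ep}(e^\ep-1)(e^\ep+1)$. So the minimum in (1) picks its first argument precisely when $\phi_{x-1} \leq 1/(1+e^\ep)$ and its second argument on the complementary event, exactly matching the two branches in (2). At equality, both formulas give the same value.

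For (3) $\Rightarrow$ (2), I would rewrite $\phi_x = F_{N_0}(x-m) = F_{N_m}(x)$ with $N_m \sim \mathrm{Tulap}(m,b,0)$ and apply Lemma \ref{CLem}: $\phi_x = b^{-x}C(m)$ for $x \leq [m]$ and $\phi_x = 1 - b^{x}C(-m)$ for $x > [m]$. In the left regime, $\phi_x/\phi_{x-1} = b^{-1} = e^\ep$, matching the first branch of (2); writing $\alpha = [m]-m+1/2 \in [0,1]$, the largest value $\phi_{[m]-1} = b(b+\alpha(1-b))/(1+b)$ is at most $b/(1+b) = 1/(1+e^\ep)$, confirming this is the correct branch. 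In the right regime, $(1-\phi_x)/(1-\phi_{x-1}) = b$ yields $\phi_x = 1-e^{-\ep}(1-\phi_{x-1})$, and a similar calculation gives $\phi_{[m]+1} - b/(1+b) = (1-b)(1+b\alpha)/(1+b) > 0$, so every $\phi_{x-1}$ with $x \geq [m]+2$ lies strictly above threshold. The transition $x = [m]+1$ is glued by the identity $b^{-[m]}C(m) = 1-b^{[m]}C(-m)$ from Lemma \ref{CLem}, which yields $1-\phi_{[m]+1} = b^{[m]+1}C(-m) = b(1-\phi_{[m]})$, so the second-branch formula applies (and coincides with the first if $\alpha=0$).

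For (2) $\Rightarrow$ (3), I would use that $N_0$ has strictly positive density on $\RR$ (Lemma \ref{TulapLem}), so $F_{N_0}:\RR \to (0,1)$ is a continuous strictly increasing bijection and $m^* := -F_{N_0}^{-1}(\phi_0) \in \RR$ is well defined. Setting $\tilde\phi_x = F_{N_0}(x-m^*)$, the already-proven (3) $\Rightarrow$ (2) shows $\tilde\phi$ satisfies the recurrence in (2); since that recurrence is deterministic and $\tilde\phi_0 = \phi_0$, a one-line induction gives $\phi_x = \tilde\phi_x$ for all $x \in \{0,\ldots,n\}$, which is (3) with shift $m^*$. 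The main obstacle is the transition $x = [m]+1$ in (3) $\Rightarrow$ (2): the piecewise form of $F_{N_0}$ and the branching condition of (2) both switch at the same step, and consistency rests on the Lemma \ref{CLem} identity together with the tiebreaking convention giving $[-m]=-[m]$, which is exactly where an off-by-one slip would break the argument.
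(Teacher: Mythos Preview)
Your argument is correct and follows essentially the same route as the paper: establish $(1)\Leftrightarrow(2)$ by the elementary threshold computation, verify $(3)\Rightarrow(2)$ by checking that $F_{N_0}(x-m)$ obeys the recurrence via the closed form in Lemma~\ref{CLem} (handling the transition at $x=[m]+1$ with the identity $b^{-[m]}C(m)=1-b^{[m]}C(-m)$), and close $(2)\Rightarrow(3)$ by using surjectivity of $F_{N_0}$ to match $\phi_0$ and then the determinism of the recurrence. Your treatment of the transition step and the threshold checks is slightly more explicit than the paper's, but the structure and the key lemma are identical.
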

\begin{proof}[Proof Sketch.]
First show that 1 and 2 are equivalent by checking which constraint is active. Then verify that $F_{N_0}(x-m)$ satisfies the recurrence of 2, using the properties in Lemma \ref{CLem}.
  \end{proof}

  It remains to show that the tests in Lemma \ref{RecurrenceLem1} are in fact UMP. The main tool used to prove this is Lemma \ref{BingLem}, which is a standard result in the classical hypothesis testing theory.

\begin{lem}
\label{BingLem}
  Let $(\mscr X, \mscr F,\mu)$ be a measure space and let $f,g$ be two densities on $\mscr X$ with respect to $\mu$. Suppose that $\phi_1,\phi_2: \mscr X\rightarrow [0,1]$ are both measurable functions $\mscr R/\mscr F$ such that $\int \phi_1 f \ d\mu \geq \int \phi_2 f \ d\mu$, and there exists $k\geq 0$ such that $\phi_1\geq \phi_2$ when $g\geq kf$ and $\phi_1\leq \phi_2$ when $g< k f$. Then $\int \phi_1 g \ d\mu \geq \int \phi_2 g \ d\mu$.
\end{lem}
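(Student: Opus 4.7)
The plan is the standard Neyman--Pearson trick: show that the integrand $(\phi_1 - \phi_2)(g - kf)$ is pointwise nonnegative on all of $\mscr X$, integrate, and rearrange.

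First I would argue the pointwise inequality. Split $\mscr X$ into the two sets $A = \{x : g(x) \geq k f(x)\}$ and $A^c = \{x : g(x) < k f(x)\}$. On $A$, the hypothesis gives $\phi_1 \geq \phi_2$, and by definition $g - kf \geq 0$, so the product $(\phi_1 - \phi_2)(g - kf) \geq 0$. On $A^c$, the hypothesis gives $\phi_1 \leq \phi_2$, while $g - kf < 0$, so again the product is nonnegative. Hence
\[
(\phi_1 - \phi_2)(g - kf) \geq 0 \quad \text{$\mu$-a.e.}
\]

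Next I would integrate against $\mu$. Measurability of all factors is guaranteed by the hypotheses, so
\[
\int (\phi_1 - \phi_2)\, g \, d\mu \;\geq\; k \int (\phi_1 - \phi_2)\, f \, d\mu.
\]
By assumption $\int \phi_1 f \, d\mu \geq \int \phi_2 f \, d\mu$, i.e.\ $\int(\phi_1-\phi_2)f\,d\mu \geq 0$, and since $k \geq 0$ the right-hand side is nonnegative. Therefore $\int \phi_1 g\, d\mu \geq \int \phi_2 g\, d\mu$, as claimed.

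I do not anticipate any real obstacle; the only subtlety is that one must allow $k = 0$, in which case the inequality $g - kf \geq 0$ holds automatically everywhere (since $g$ is a density) and the argument on $A^c$ is vacuous, but the bound $\int(\phi_1-\phi_2)g\,d\mu \geq 0$ still follows directly from the pointwise sign of $(\phi_1-\phi_2)g$ on $A$. Measurability of the sets $A, A^c$ requires $f, g$ to be measurable, which is implicit in their being densities.
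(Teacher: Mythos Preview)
Your argument is correct and is precisely the paper's own proof: observe that $(\phi_1-\phi_2)(g-kf)\geq 0$ pointwise, integrate, and rearrange using $k\geq 0$ together with $\int(\phi_1-\phi_2)f\,d\mu\geq 0$. The only difference is that you spell out the case split on $A$ and $A^c$ and the $k=0$ edge case, whereas the paper leaves those implicit.
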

\begin{proof}
  Note that $(\phi_1 - \phi_2)(g-kf)\geq 0$ for almost all $x\in \mscr X$ (with respect to $\mu$). This implies that $\int (\phi_1 - \phi_2)(g-kf) \ d\mu \geq 0$. Hence, $\int \phi_1 g \ d\mu - \int \phi_2 g \ d\mu \geq k\l(\int \phi_1 f\ d\mu - \int \phi_2 f \ d\mu\r) \geq 0$.
\end{proof}

Next we present our key result, Theorem \ref{UMP1}, which can be viewed as a `Neyman-Pearson lemma' for binomial data under $(\ep,0)$-DP. 

\begin{thm}\label{UMP1}
  Let $\ep>0,\al\in (0,1),0<\ta_0<\ta_1<1$, and $n\geq1$ be given. Observe $X\sim \mathrm{Binom}(n,\ta)$, where $\ta$ is unknown. Let $N_0 \sim \mathrm{Tulap}(0,b=e^{-\ep},0)$. Set the decision rule $\phi^*: \ZZ\rightarrow [0,1]$ by $\phi^*_x = F_{N_0}(x-m)$, where $m$ is chosen such that $E_{\ta_0}\phi^*_x=\al$. Then $\phi^*$ is UMP-$\al$ test of $H_0: \ta=\ta_0$ versus $H_1: \ta = \ta_1$ among $\mscr D_{\ep,0}^n$.
\end{thm}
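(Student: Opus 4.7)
The plan is to follow the architecture of the classical Neyman-Pearson lemma, using the monotone likelihood ratio (MLR) of the binomial family and invoking Lemma \ref{BingLem} as the final step, with the role of the ``likelihood ratio test'' played by the Tulap-cdf test $\phi^*$. The proof splits naturally into three preliminary checks followed by one key structural claim that powers the comparison.

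First I would confirm that $\phi^*$ actually lies in $\mscr D^n_{\ep,0}$ and can be calibrated so that $E_{\ta_0}\phi^*=\al$. Membership follows from Lemma \ref{RecurrenceLem1}: since $\phi^*_x = F_{N_0}(x-m)$, item 3 $\Rightarrow$ item 1 gives the recursion $\phi^*_x=\min\{e^\ep\phi^*_{x-1},\,1-e^{-\ep}(1-\phi^*_{x-1})\}$, which saturates \eqref{DP2} and \eqref{DP3} by construction, and a routine one-line check shows this recursion also respects the reverse bounds \eqref{DP1} and \eqref{DP4}. For calibration, $N_0$ is continuous (Lemma \ref{TulapLem}) and $F_{N_0}(x-m)$ is continuous and monotone in $m$ (using monotonicity of $C(m)$ in Lemma \ref{CLem}), so $m\mapsto E_{\ta_0}\phi^*$ is continuous and monotone, sweeping $(0,1)$; hence the required $m$ exists.

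The heart of the proof is a \emph{single-crossing} property I would establish next: for any competitor $\phi\in \mscr D^n_{\ep,0}$ with $E_{\ta_0}\phi\le\al$, there exists $x^*\in\{0,\ldots,n+1\}$ such that $\phi^*_x<\phi_x$ for $x<x^*$ and $\phi^*_x\ge\phi_x$ for $x\ge x^*$. The key observation is that \eqref{DP2} and \eqref{DP3} applied to any $\phi\in\mscr D^n_{\ep,0}$ furnish the pointwise envelope $\phi_{x+1}\le\min\{e^\ep\phi_x,\,1-e^{-\ep}(1-\phi_x)\}$, while by Lemma \ref{RecurrenceLem1} the test $\phi^*$ attains this envelope with equality. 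Because the envelope is monotone increasing in its argument, the ordering $\phi^*_x\ge\phi_x$ propagates upward in $x$, and strict inequality propagates strictly. Hence $\{x:\phi^*_x\ge\phi_x\}$ is upward closed, and $x^*$ may be taken as its minimum; the degenerate case $x^*=n+1$ would force $E_{\ta_0}\phi^*<E_{\ta_0}\phi\le\al=E_{\ta_0}\phi^*$, a contradiction, so $x^*\le n$.

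Finally, I would apply Lemma \ref{BingLem} with $f$ the Binom$(n,\ta_0)$ pmf, $g$ the Binom$(n,\ta_1)$ pmf, $\phi_1=\phi^*$, $\phi_2=\phi$. By the binomial MLR, $g(x)/f(x)$ is strictly increasing in $x$, so I can choose $k>0$ with $\{x:g(x)\ge kf(x)\}=\{x\ge x^*\}$, which by the single-crossing step means $\phi^*\ge\phi$ where $g\ge kf$ and $\phi^*\le\phi$ where $g<kf$. Combined with $\int\phi^* f\,d\mu=\al\ge\int\phi f\,d\mu$, Lemma \ref{BingLem} delivers $\be_{\phi^*}(\ta_1)\ge\be_\phi(\ta_1)$, which is the UMP conclusion. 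The step I expect to be most delicate is the single-crossing propagation, in particular tracking strict versus weak inequalities cleanly under the min-recursion and separately disposing of the boundary case $x^*=0$ (where $\phi^*\ge\phi$ everywhere, so the conclusion is immediate and Lemma \ref{BingLem} is applied with a suitably small $k$).
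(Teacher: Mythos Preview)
Your proposal is correct and follows essentially the same route as the paper: verify $\phi^*\in\mscr D^n_{\ep,0}$ via Lemma \ref{RecurrenceLem1}, calibrate $m$ by continuity/monotonicity, establish a single-crossing between $\phi^*$ and any competitor by propagating the inequality $\phi^*_x\ge\phi_x$ upward using that $\phi^*$ attains the envelope $\min\{e^\ep t,\,1-e^{-\ep}(1-t)\}$ while any $\phi$ merely obeys it, and finish with MLR plus Lemma \ref{BingLem}. The only cosmetic difference is that the paper phrases the crossing point as the smallest $y$ with $\phi^*_y>\phi_y$ (strict) rather than your $x^*=\min\{x:\phi^*_x\ge\phi_x\}$, but the induction and the application of Lemma \ref{BingLem} are identical.
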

\begin{proof}[Proof Sketch.]
  %We know that $\phi^*$ can be written in the form of 1 in Lemma \ref{RecurrenceLem1}.
  Let $\phi$ be any other test which satisfies \eqref{DP1}-\eqref{DP4} at level $\al$. Then, since $\phi^*$ can be written in the form of 1 in Lemma \ref{RecurrenceLem1}, there exists $y\in \ZZ$ such that $\phi^*_x\geq \phi_x$ when $x\geq y$ and $\phi^*_x\leq \phi_x$ when $x<y$. By MLR of the binomial distribution and Lemma \ref{BingLem}, we have $\beta_{\phi^*}(\ta_1)\geq \beta_{\phi}(\ta_1)$.
 % that the power of $\phi^*$ is greater than $\phi$.
\end{proof}

%Theorem \ref{UMP1} can be viewed as a `Neyman-Pearson lemma' for Bernouilli data under $(\ep,0)$-DP. 
While the classical Neyman-Pearson lemma results in an acceptance and rejection region, the DP-UMP always has some probability of rejecting the null, due to the constraints \eqref{DP1}-\eqref{DP4}. As $\ep\uparrow \infty$, the DP-UMP converges to the non-private UMP.

%%%%%%%%%%%%%%%%%%%%%%%%%%%%%%%%%%%%%%%%%%%%%%%%%%%%%%%%%%%%
%%%     UMP  Delta >= 0
%%%%%%%%%%%%%%%%%%%%%%%%%%%%%%%%%%%%%%%%%%%%%%%%%%%%%%%%%%%%
\section{UMP  tests when $\de\geq0$}\label{SectionDnz}
In this section, we extend the results of Section \ref{SectionDz} to allow for $\de\geq 0$. We begin by proposing the form of the UMP-$\al$ test for simple hypotheses. As in Section \ref{SectionDz}, we suspect that the  UMP test is increasing in $x$ as much as \eqref{DP1}-\eqref{DP4} allow. %This results in $\phi_x$ being in the form 1 in Lemma \ref{RecurrenceLem2}.
Lemma \ref{RecurrenceLem2} states that such a test can be written as the cdf of a Tulap random variable. We omit the proof of Theorem \ref{UMP2}, which mimics the proof of Theorem \ref{UMP1}.

\begin{lem}\label{RecurrenceLem2}
  Let $\ep>0$ and $\de\geq 0$ be given and set $q = \frac{2\de b}{1-b+2\de b}$. Let $\phi:\{0,1,2,\ldots, n\}\rightarrow [0,1]$. The following are equivalent:
  \begin{enumerate}
  \item There exists $y\in \{0,1,2,\ldots, n\}$ and $m \in (0,1)$ such that $\phi_x=0$ for $x<y$, $\phi_y=m$ and for $x>y$,
$\phi_x = %\begin{cases}
%0&x<y\\
%m&x=y\\
\min\{e^{\ep} \phi_{x-1}+\de,\quad 1-e^{-\ep}(1-\phi_{x-1})+e^{-\ep}\de,\quad 1\}%&x>y
% \end{cases}\]
$
\item There exists $y\in \{0,1,2,\ldots, n\}$ and $m\in (0,1)$ such that $\phi_x=0$ for $x<y$, $\phi_y=m$, and 
\[\phi_x = \begin{cases}
%0&x<y\\
%m&x=y\\
e^\ep \phi_{x-1}+\de&x>y \text{ and }\phi_{x-1} \leq \frac{1-\de}{1+e^{\ep}}\\
1-e^{-\ep}(1-\phi_{x-1})+e^{-\ep}\de&\frac{1-\de}{1+e^{\ep}} \leq \phi_{x-1} \leq 1-\de\\
1&\phi_{x-1} >1-\de
\end{cases}\]
\item There exists $m\in \RR$ such that 
$\phi_x = F_N(x-m)$ where $N\sim \mathrm{Tulap}(0,e^{-\ep},q)$.
  \end{enumerate}
\end{lem}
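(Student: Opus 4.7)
The plan is to establish the three equivalences in two steps, adapting the proof of Lemma~\ref{RecurrenceLem1} to the $\delta>0$ setting.

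\textbf{Step 1 ($1 \Leftrightarrow 2$).} This is purely algebraic. I would determine which of the three candidates in the minimum is attained as a function of $\phi_{x-1}$. All three expressions $e^\epsilon \phi_{x-1}+\delta$, $1-e^{-\epsilon}(1-\phi_{x-1})+e^{-\epsilon}\delta$, and $1$ are nondecreasing in $\phi_{x-1}$. Equating the first two yields the crossover $\phi_{x-1}=(1-\delta)/(1+e^\epsilon)$, and the second equals $1$ exactly when $\phi_{x-1}=1-\delta$. Below the first threshold the first expression is smallest, between the two thresholds the second is smallest, and above the second threshold the constant $1$ is smallest, giving the piecewise formula in~2.

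\textbf{Step 2 ($2 \Leftrightarrow 3$).} Set $b=e^{-\epsilon}$ and recall $q=2\delta b/(1-b+2\delta b)$. From Definition~\ref{TulapDefn},
\[
F_N(t) \;=\; \max\!\left\{0,\ \min\!\left\{\tfrac{F_{N_0}(t)-q/2}{1-q},\ 1\right\}\right\},
\]
where $N_0\sim\mathrm{Tulap}(0,b,0)$. By Lemma~\ref{RecurrenceLem1}, $\psi_x:=F_{N_0}(x-m)$ already satisfies the $\delta=0$ recurrence $\psi_x=\min\{e^\epsilon\psi_{x-1},\ 1-e^{-\epsilon}(1-\psi_{x-1})\}$. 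In the middle regime where $q/2<\psi_{x-1},\psi_x<1-q/2$, I use the affine relation $\psi_x=(1-q)\phi_x+q/2$ to pull the recurrence over to $\phi$. The branch $\psi_x=e^\epsilon\psi_{x-1}$ becomes
\[
\phi_x \;=\; e^\epsilon\phi_{x-1} + \frac{q(e^\epsilon-1)}{2(1-q)},
\]
and the key identity $q(e^\epsilon-1)/[2(1-q)]=\delta$, which follows by direct substitution using $1-q=(1-b)/(1-b+2\delta b)$, reduces this exactly to the first branch of~2. The parallel calculation on $\psi_x=1-e^{-\epsilon}(1-\psi_{x-1})$ gives the second branch. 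The truncation $F_N=0$ produces the prefix $\phi_x=0$ for $x<y$, while the truncation $F_N=1$ gives the saturation branch; to align the latter with the cutoff $\phi_{x-1}>1-\delta$ in~2, I would verify that $\phi_{x-1}=1-\delta$ corresponds to $\psi_{x-1}=1-e^\epsilon q/2$, which is precisely the largest $\psi_{x-1}$ satisfying $1-e^{-\epsilon}(1-\psi_{x-1})\leq 1-q/2$.

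\textbf{Main obstacle.} The pivotal step is the algebraic cancellation $q(e^\epsilon-1)/[2(1-q)]=\delta$; this identity is the \emph{raison d'\^etre} of the specific choice of $q$ in the Tulap distribution and is what converts the $\delta$-slack in the DP recurrence into the $q$-quantile truncation. Once it is established, the remaining work is bookkeeping: tracking which of the three pieces of $F_N$ the argument $x-m$ falls into and matching the breakpoints on both sides. A secondary detail is exhibiting the shift $m\in\RR$ in~3 from $(y,m)\in\{0,\ldots,n\}\times(0,1)$ in~2, which reduces to solving $F_N(y-m')=m$ using the continuity and strict monotonicity of $F_N$ on its support from Lemma~\ref{TulapLem}.
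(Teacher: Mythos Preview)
Your proposal is correct and follows essentially the same route as the paper: establish $1\Leftrightarrow 2$ by locating the crossover points $(1-\de)/(1+e^{\ep})$ and $1-\de$, then show $2\Leftrightarrow 3$ by transporting the $\de=0$ recurrence for $F_{N_0}$ through the affine map $F_N=(F_{N_0}-q/2)/(1-q)$ and invoking Lemma~\ref{RecurrenceLem1}. The paper carries out the second step as a direct case-by-case computation (first proving the threshold correspondence $\phi_x\leq(1-\de)/(1+e^\ep)\iff F_{N_0}(x-m)\leq 1/(1+e^\ep)$, then verifying each branch), whereas you isolate the identity $q(e^\ep-1)/[2(1-q)]=\de$ up front and let it do the work; these are the same argument packaged slightly differently.
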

\begin{proof}[Proof Sketch.]
 The equivalence of 1 and 2 only requires determining which constraints are active. To show the equivalence of 2 and 3, we verify that $F_{N}(x-m)$ satisfies the recurrence of 2, using the relation $F_N(x) = (1-q)^{-1}(F_{N_0}(x) - \frac q2) I\{\frac q2 \leq F_{N_0}(x)\leq 1-\frac q2\}$ and  Lemma \ref{RecurrenceLem1}.
\end{proof}

\begin{thm}\label{UMP2}
  Let $\ep>0$, $\de\geq 0$, $\al \in (0,1)$, $0<\ta_0<\ta_1<1$, and $n\geq 1$ be given. Observe $X\sim \mathrm{Binom}(n,\ta)$, where $\ta$ is unknown. Set $b=e^{-\ep}$ and $q = \frac{2\de b}{1-b+2\de b}$. Define $\phi^*: \ZZ \rightarrow [0,1]$ by $\phi^*_x = F_N(x-m)$ where $N\sim \mathrm{Tulap}(0,b,q)$ and $m$ is chosen such that $E_{\ta_0}\phi^*_x=\al$. Then $\phi^*$ is UMP-$\al$ test of $H_0: \ta=\ta_0$ versus $H_1: \ta = \ta_1$ among $\mscr D_{\ep,\de}^n$.
\end{thm}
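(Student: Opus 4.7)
The plan is to mimic the proof of Theorem \ref{UMP1}, adapting the crossing argument to accommodate the fact that for $\de>0$ the test $\phi^*$ may be identically $0$ on an initial segment and identically $1$ on a final segment of $\{0,\ldots,n\}$. Three ingredients are needed: (a) $\phi^*\in \mscr D^n_{\ep,\de}$ and a suitable $m$ exists; (b) for every competitor $\phi\in\mscr D^n_{\ep,\de}$ with $E_{\ta_0}\phi\le\al$, there is an integer $y^*$ such that $\phi^*_x\le \phi_x$ for $x<y^*$ and $\phi^*_x\ge \phi_x$ for $x\ge y^*$; and (c) Lemma \ref{BingLem} combined with the monotone likelihood ratio of the binomial family.

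For (a), Lemma \ref{RecurrenceLem2} (item $3\Rightarrow$ item $1$) shows that $F_N(x-m)$ obeys the max-recurrence, which encodes \eqref{DP2} and \eqref{DP3}; the remaining constraints \eqref{DP1} and \eqref{DP4} are lower bounds on $\phi^*_x$ in terms of $\phi^*_{x-1}$ and hold automatically because $F_N(\cdot-m)$ is non-decreasing and $[0,1]$-valued. For existence of $m$ with $E_{\ta_0}\phi^*=\al$, the map $m\mapsto E_{\ta_0}F_N(X-m)$ is a finite sum of continuous monotone functions of $m$ with limits $1$ and $0$ as $m\to\mp\infty$, so the intermediate value theorem applies.

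For (b), set $y^* = \min\{x\in\{0,\ldots,n\} : \phi^*_x > 0 \text{ and } \phi^*_x \ge \phi_x\}$. Non-emptiness first: if the set were empty then, using $\phi^*_x=0\le\phi_x$ for $x<y$ (where $y$ is the index from Lemma \ref{RecurrenceLem2}) together with the failure $\phi^*_x<\phi_x$ for $x\ge y$, we would obtain $E_{\ta_0}\phi^*<E_{\ta_0}\phi\le\al$, contradicting $E_{\ta_0}\phi^*=\al$. I then induct on $x\ge y^*$: assuming $\phi^*_{x-1}\ge \phi_{x-1}$ with $\phi^*_{x-1}>0$, the upper bound on $\phi_x$ derived from \eqref{DP2}, \eqref{DP3}, and $\phi_x\le 1$ is non-decreasing in $\phi_{x-1}$ and equals $\phi^*_x$ when evaluated at $\phi^*_{x-1}$ (by the max-recurrence of Lemma \ref{RecurrenceLem2}), yielding $\phi_x\le \phi^*_x$; since $\phi^*$ is non-decreasing we also have $\phi^*_x\ge \phi^*_{x-1}>0$, so the induction continues. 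For $x<y^*$, either $x<y$ (so $\phi^*_x=0\le\phi_x$) or $y\le x<y^*$ (so $\phi^*_x<\phi_x$ by minimality of $y^*$ in $\{y,\ldots,n\}$).

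For (c), the monotone likelihood ratio of $\mathrm{Binom}(n,\ta)$ in $x$ provides $k\ge 0$ such that $p_X(x|\ta_1)/p_X(x|\ta_0)\ge k$ if and only if $x\ge y^*$. Combined with (b) and $\int \phi^*\, p_X(\cdot|\ta_0)\,d\mu=\al\ge\int \phi\, p_X(\cdot|\ta_0)\,d\mu$, Lemma \ref{BingLem} delivers $\beta_{\phi^*}(\ta_1)\ge \beta_\phi(\ta_1)$. I expect the main obstacle to be the proper definition of $y^*$ in (b): the naive choice $\min\{x:\phi^*_x\ge \phi_x\}$ fails because $\phi$ can be positive (up to $\de$) on the zero segment $\{x<y\}$, potentially breaking the forward induction; insisting $\phi^*_{y^*}>0$ in the definition is exactly what is needed to launch the monotone max-recurrence from $\phi^*_{y^*}$ onward, and nonemptiness is then forced by the size condition $E_{\ta_0}\phi^*=\al$.
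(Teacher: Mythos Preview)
Your proposal is correct and follows essentially the same route the paper intends (the paper explicitly says the proof of Theorem~\ref{UMP2} mimics that of Theorem~\ref{UMP1}). One cosmetic remark: in the paper's proof of Theorem~\ref{UMP1} the crossing index is taken as the smallest $y$ with the \emph{strict} inequality $\phi^*_y>\phi_y$, which automatically forces $\phi^*_y>0$ and hence $y\ge$ the index from Lemma~\ref{RecurrenceLem2}; your variant ``$\phi^*_{y^*}>0$ and $\phi^*_{y^*}\ge\phi_{y^*}$'' is an equivalent device that reaches the same forward induction, so the extra discussion of why positivity is needed is sound but not strictly necessary.
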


So far we have focused on simple hypothesis tests, but since our test only depends on $\ta_0$, and not on $\ta_1$, our test is in fact UMP for one-sided tests, as stated in Corollary \ref{OneSide1}. 

% \begin{cor}\label{OneSide1}
%   Given $X\sim \mathrm{Binom}(n,\ta)$, the UMP-$\al$ test among $\mscr D_{\ep,\de}^n$ for testing $H_0: \ta\leq\ta_0$ versus $H_1: \ta >\ta_0$ is $\phi^*_x = F_{N}(x-m)$, where $N \sim \mathrm{Tulap}\l(0,e^{-\ep},\frac{2\de b}{1-b+2\de b}\r)$ and $m$ is chosen such that $E_{\ta_0} \phi^*_x=\al$. 
% \end{cor}

% While we have focused on tests where the null is less than the alternative, in fact the other one sided test is very similar, stated in Corollary \ref{OtherSide1}.

% \begin{cor}\label{OtherSide1}
%   Given $X\sim \mathrm{Binom}(n,\ta)$, the UMP-$\al$ among $\mscr D_{\ep,\de}^n$ for testing $H_0: \ta \geq \ta_0$ versus $H_1: \ta<\ta_0$ is $\phi^*_x = 1- F_{N}(x-m)$ where $N \sim \mathrm{Tulap}\l(0,e^{-\ep},\frac{2\de b}{1-b+2\de b}\r)$ and $m$ is chosen such that $E_{\ta_0} \phi^*_x =\al$. 
% \end{cor}

\begin{cor}\label{OneSide1}
Let $X\sim \mathrm{Binom}(n,\ta)$. Set $\phi^*_x = F_N(x-m_1)$ and $\psi^*_x = 1-F_N(x-m_2)$, where $N\sim \mathrm{Tulap}\l(0, e^{-\ep}\frac{2\de b}{1-b+2\de b}\r)$ and $m_1,m_2$ are chosen such that $E_{\ta_0}\phi^*_x=\al$ and $E_{\ta_0} \psi^*_x=\al$. Then $\phi^*_x$ is UMP-$\al$ among $\mscr D_{\ep,\de}^n$ for testing $H_0: \ta\geq \ta_0$ versus $H_1: \ta<\ta_0$, and $\psi^*_x$ is UMP-$\al$ among $\mscr D_{\ep,\de}^n$ for testing $H_0: \ta\leq \ta_0$ versus $H_1: \ta> \ta_0$.
\end{cor}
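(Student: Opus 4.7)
The plan is to lift Theorem \ref{UMP2}'s simple-alternative result to the one-sided composite setting via the classical Karlin--Rubin template, leveraging the monotone likelihood ratio (MLR) of the binomial family. The two halves of the corollary are symmetric, and I describe the argument in parallel for $\phi^*$ and $\psi^*$.

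The first ingredient is a mirror analog of Theorem \ref{UMP2} covering the case $\ta_1 < \ta_0$: the UMP-$\al$ DP test for simple $H_0:\ta=\ta_0$ versus $H_1:\ta=\ta_1<\ta_0$ is of the form $1-F_N(x-m)$ with $m$ chosen so that $E_{\ta_0}[1-F_N(X-m)]=\al$. This is proved by rerunning the argument of Theorem \ref{UMP2} with the binomial likelihood ratio direction reversed (it is decreasing in $x$ when $\ta_1<\ta_0$) and with the extremal DP candidate chosen to decrease in $x$ as fast as possible. The latter is captured by a twin of Lemma \ref{RecurrenceLem2} for non-increasing DP tests, which holds because the constraint set $\mscr D^n_{\ep,\de}$ is invariant under $\phi \leftrightarrow 1-\phi$ (inequalities \eqref{DP1}--\eqref{DP4} come in reflected pairs, so the map $\phi \mapsto 1-\phi$ permutes \eqref{DP1}$\leftrightarrow$\eqref{DP3} and \eqref{DP2}$\leftrightarrow$\eqref{DP4}). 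Lemma \ref{BingLem} then seals optimality with the roles of the two densities exchanged.

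The second ingredient is monotonicity of the power functions, delivered by binomial MLR: since $\phi^*$ is non-decreasing in $x$ and $\psi^*$ is non-increasing in $x$, we have $\beta_{\phi^*}(\ta)$ non-decreasing in $\ta$ and $\beta_{\psi^*}(\ta)$ non-increasing in $\ta$. Together with the normalizations $E_{\ta_0}\phi^*_x = E_{\ta_0}\psi^*_x = \al$, this pins the size of each test on its composite null to exactly $\al$, since the supremum of the power function over each composite null is attained at the boundary $\ta_0$. Finally, for each $\ta_1$ in the composite alternative, the first ingredient (together with Theorem \ref{UMP2}) identifies the UMP against that particular $\ta_1$, and this UMP depends only on $\ta_0$ and $\al$---not on $\ta_1$. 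Hence $\phi^*$ (respectively $\psi^*$) dominates every other level-$\al$ DP test at every $\ta_1$ in its composite alternative simultaneously, yielding the conclusion of Corollary \ref{OneSide1}.

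The main obstacle is establishing the mirror version of Theorem \ref{UMP2}. The technical content is essentially a transcription of the original proof with orientations reversed, but care is needed to keep the likelihood-inequality direction consistent with the reversed monotonicity of the test when invoking Lemma \ref{BingLem}, and to set up the recursive characterization (the analog of Lemma \ref{RecurrenceLem2}) for non-increasing rather than non-decreasing DP tests. Once that reflected template is in place, the MLR-based promotion to composite hypotheses is routine.
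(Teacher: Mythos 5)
Your treatment of $\phi^*$ matches the paper's: binomial MLR gives monotonicity of the power function, so the supremum over the composite null is attained at $\ta_0$, and Theorem \ref{UMP2} then gives most-powerfulness against each simple alternative; since the resulting test does not depend on the particular $\ta_1$, it is UMP. Where you diverge is in handling $\psi^*$. You propose to re-derive a full mirror image of the machinery—a twin of Lemma \ref{RecurrenceLem2} characterizing non-increasing DP tests and a reflected Theorem \ref{UMP2}—supported by the (correct) observation that $\mscr D^n_{\ep,\de}$ is closed under $\phi \mapsto 1-\phi$ because that map permutes constraints \eqref{DP1}$\leftrightarrow$\eqref{DP3} and \eqref{DP2}$\leftrightarrow$\eqref{DP4}. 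The paper avoids any re-derivation by exploiting the binomial relabeling symmetry instead: set $X'=n-X$ and $\ta'=1-\ta$, so $X'\sim\mathrm{Binom}(n,\ta')$ and the lower-tail test in $\ta$ becomes an upper-tail test in $\ta'$, already covered by the $\phi^*$ case; the identity $F_{N_0}(n-x-m)=1-F_{N_0}(x-m')$ then exhibits $\psi^*$ in the stated form. Your route is sound and makes explicit a structural fact (reflection invariance of the DP constraint polytope) that the paper never states, but it costs you a near-verbatim reproof of Lemma \ref{RecurrenceLem2} and Theorem \ref{UMP2}; the paper's data-and-parameter substitution gets the same conclusion for free. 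One small note: the corollary's text pairs $\phi^*$ with $H_0:\ta\geq\ta_0$, but since $\phi^*$ is nondecreasing in $x$ and its power is nondecreasing in $\ta$, it must be the test for $H_0:\ta\leq\ta_0$ (as the paper's own proof and your argument both implicitly take it)—your write-up uses the correct pairing.
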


%%%%%%%%%%%%%%%%%%%%%%%%%%%%%%%%%%%%%%%%%%%%%%%%%%%%%%%%%%%%
%%%   P VALUES
%%%%%%%%%%%%%%%%%%%%%%%%%%%%%%%%%%%%%%%%%%%%%%%%%%%%%%%%%%%%
\section{Optimal one-sided p-values}\label{pValues}
Inference based on developed  UMP-$\al$ tests for one-sided hypotheses under $(\ep,\de)$-DP is simply to accept or reject $H_0$. In scientific research, however, $p$-values are more preferred way for weighing the evidence in favor of the alternative hypothesis over the null. In this section, we show that these UMP tests can be achieved by post-processing a Tulap random variable, and using this,  we develop an algorithm for releasing a $p$-value which agrees with the UMP tests in Sections \ref{SectionDz} and \ref{SectionDnz}.
%So far we have developed  UMP-$\al$ tests for one-sided hypotheses under $(\ep,\de)$-DP. However, the result of these methods is simply to accept or reject. In scientific research, often $p$-values are more useful for weighing the evidence in favor of the alternative hypothesis over the null. In this section, we show that these UMP tests can be achieved as post-processing a tulap random variable, and using this,  develop an algorithm for releasing a $p$-value which agrees with the UMP tests in Sections \ref{SectionDz} and \ref{SectionDnz}.

Since our UMP test from Theorem \ref{UMP2} rejects  with probability $\phi^*_x = F_N(x-m)$, given $N\sim F_N$, $\phi^*_x$ rejects the null if and only if $X+N\geq m$. If  $\{X+N\mid X\in \{0,1,\ldots, n\}\}$ satisfies $(\ep,\de)$-DP, our test is just a post-processing of $X+N$. To prove Theorem \ref{TulapDP}, we require  Lemma \ref{CDFLem}.

\begin{lem}\label{CDFLem}
  Observe $X\in \mscr X^n$. Let $T: \mscr X^n \rightarrow \RR$, and let $\{\mu_X\mid X\in \mscr X^n\}$ be a set of probability measures on $\RR$, dominated by Lebesgue measure. Suppose that $\mu_X$ is parametrized by $T(X)$ and $\mu_X$ has MLR in $T(X)$. Then $\{\mu_X\}$ satisfies $(\ep,\de)$-DP if and only if for all $\de(X_1,X_2)=1$ and all $t\in \RR$,% we have that 
  \begin{align}
    \mu_{X_1}((-\infty,t)) &\leq e^{\ep} \mu_{X_2}((-\infty,t))+\de,\\
\mu_{X_1}((t,\infty))&\leq e^{\ep} \mu_{X_2}((t,\infty)) + \de.
  \end{align}
\end{lem}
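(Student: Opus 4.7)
The plan is to prove the nontrivial \emph{if} direction; the \emph{only if} direction is immediate because $(-\infty,t)$ and $(t,\infty)$ are themselves measurable, so differential privacy applied to these sets yields the two inequalities. Throughout the proof I would fix a pair $X_1,X_2$ with $\de(X_1,X_2)=1$, write $f_i$ for the Lebesgue density of $\mu_{X_i}$, and note that by swapping the roles of $X_1$ and $X_2$ we also have, for free, $\mu_{X_2}((-\infty,t))\leq e^\ep\mu_{X_1}((-\infty,t))+\de$ and the analogous inequality on $(t,\infty)$.

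The core idea is a Neyman--Pearson-style worst-case argument. For an arbitrary measurable $B\subseteq\RR$ I want to bound $\mu_{X_1}(B)-e^\ep\mu_{X_2}(B)$ from above by $\de$. The pointwise identity
\[
\mu_{X_1}(B)-e^\ep\mu_{X_2}(B)=\int_B(f_1-e^\ep f_2)\,dy
\]
shows that this quantity is maximized over all measurable $B$ by taking $B^\star=\{y:f_1(y)>e^\ep f_2(y)\}$. Now WLOG assume $T(X_1)\ne T(X_2)$; relabel so that $T(X_1)>T(X_2)$. The MLR hypothesis then says that $f_1/f_2$ is monotone non-decreasing in $y$, which forces $B^\star$ to be an interval of the form $(t^\star,\infty)$ (up to a Lebesgue-null modification at the boundary, which is harmless because $\mu_{X_i}$ is absolutely continuous). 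Therefore
\[
\mu_{X_1}(B)-e^\ep\mu_{X_2}(B)\;\le\;\mu_{X_1}((t^\star,\infty))-e^\ep\mu_{X_2}((t^\star,\infty))\;\le\;\de,
\]
the last inequality being precisely the assumed right-tail bound. If instead $T(X_1)<T(X_2)$, the MLR direction reverses and the same argument puts $B^\star$ in the form $(-\infty,t^\star)$, invoking the left-tail bound. The degenerate case $T(X_1)=T(X_2)$ makes $\mu_{X_1}=\mu_{X_2}$ (since the family is parametrized by $T$), so DP holds trivially.

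Applying the same reasoning with $X_1$ and $X_2$ swapped gives $\mu_{X_2}(B)\le e^\ep\mu_{X_1}(B)+\de$, which together with the bound above establishes the full $(\ep,\de)$-DP condition for this pair; since the pair was arbitrary, the family $\{\mu_X\}$ is $(\ep,\de)$-DP. The only subtlety I foresee is making the reduction to a half-line rigorous when the MLR is only weakly monotone, i.e.\ when $f_1/f_2$ has flat pieces: in that case $B^\star$ is determined only up to a set on which $f_1=e^\ep f_2$, but any such set contributes $0$ to the objective, so enlarging or shrinking $B^\star$ to a genuine half-line $(t^\star,\infty)$ does not change the value of $\mu_{X_1}(B^\star)-e^\ep\mu_{X_2}(B^\star)$. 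This is the main — and essentially only — technical step; everything else is bookkeeping.
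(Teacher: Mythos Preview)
Your proof is correct and follows essentially the same Neyman--Pearson reduction as the paper: both arguments observe that the worst-case measurable set $B$ for the DP inequality is, by MLR, a half-line, so the general DP condition collapses to the two tail inequalities. Your framing---directly maximizing $\int_B(f_1-e^\ep f_2)$ and reading off $B^\star=\{f_1>e^\ep f_2\}$---is slightly more direct than the paper's, which fixes the ``type~I error'' $\mu_{X'}(B)=\alpha$ and invokes the classical Neyman--Pearson lemma plus continuity of the cdf, but the underlying idea is identical.
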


\begin{proof}[Proof Sketch.]
As in \citet{Wasserman2010:StatisticalFDP} and \citet{Kairouz2017}, we interpret  DP as a constraint on the power of  hypothesis tests for $H_0: X=X_1$ versus $H_1: X=X_2$. Then, for a fixed type I error, the Neyman-Pearson Lemma tells us that the tests with highest power have rejection regions which are half intervals.%, since we have continuous distributions with MLR.
  \end{proof}

Note that the Tulap random variables stated in the Theorem \ref{TulapDP} are continuous and have MLR in $T(X)$. By Theorem Lemma \ref{RecurrenceLem2}, we know that the cdfs of these random variables satisfy the conditions of Lemma \ref{CDFLem}. The result of Theorem \ref{TulapDP} follows.

By Theorem \ref{TulapDP}, releasing $X+N$ satisfies $(\ep,\de)$-DP. By Proposition \ref{PostProcessing}, once we release $X+N$, any function of $X+N$ also satisfies $(\ep,\de)$-DP. Thus, we can compute our UMP-$\al$ tests as a function of $X+N$ for any $\al$. By definition, the smallest $\al$ for which we reject the null is the $p$-value for that test. In fact Algorithm \ref{PValueAlgorithm} and Theorem \ref{PValueResult} give a more elegant method of computing this $p$-value.

\begin{thm}\label{PValueResult}
  Let $\ep>0$, $\de\geq 0$, $X\sim \mathrm{Binom}(m,\ta)$ where $\ta$ is unknown, and $Z|X \sim \mathrm{Tulap}(X,b=e^{-\ep}, q= \frac{2\de b}{1-b+2\de b})$. Then 
  \begin{enumerate}
  \item $p(\ta_0,Z) \defeq P(X+N\geq Z\mid Z)$ is a $p$-value for $H_0: \ta\leq \ta_0$ versus $H_1: \ta>\ta_0$, where the probability is over $X\sim \mathrm{Binom}(n,\ta_0)$ and $ N\sim \mathrm{Tulap}(0,b,q)$.
\item Let $0<\al<1$ be given. The test $\phi_X^* = P_{Z| X \sim \mathrm{Tulap}(X,b,q)} (p(\ta_0,Z)\leq \al \mid X)$ is UMP-$\al$ for $H_0: \ta\leq \ta_0$ versus $H_1: \ta>\ta_0$ among $\mscr D_{\ep,\de}^n$.
\item The output of Algorithm \ref{PValueAlgorithm} is equal to $p(\ta_0,Z)$.
  \end{enumerate}
\end{thm}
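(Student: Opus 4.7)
The three parts will be handled in order, exploiting the fact that $Z \mid X \sim \mathrm{Tulap}(X,b,q)$ has a continuous distribution symmetric about $X$, and that for independent $X \sim \mathrm{Binom}(n,\theta)$ and $N \sim \mathrm{Tulap}(0,b,q)$, the unconditional distribution of $Z$ is the same as that of $X+N$.

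For Part 1, the plan is to apply the probability integral transform. Let $G_\theta$ denote the CDF of $X+N$ when $X \sim \mathrm{Binom}(n,\theta)$ and $N \sim \mathrm{Tulap}(0,b,q)$ independently; because $N$ is continuous (Lemma \ref{TulapLem}), so is $G_\theta$. First I observe that $p(\theta_0,Z) = 1 - G_{\theta_0}(Z)$ almost surely, since $X+N$ is continuous so the event $\{X+N = Z\}$ has probability zero. Under $\theta=\theta_0$, $Z \stackrel{d}{=} X+N$, hence $G_{\theta_0}(Z) \sim \mathrm{Unif}(0,1)$ and therefore $p(\theta_0,Z) \sim \mathrm{Unif}(0,1)$. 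For $\theta < \theta_0$, the monotone likelihood ratio property of the Binomial family (together with independence of $N$) makes $Z$ stochastically smaller under $\theta$ than under $\theta_0$, so $G_{\theta_0}(Z)$ is stochastically smaller than a uniform variable, giving $P_\theta(p(\theta_0,Z) \leq \alpha) \leq \alpha$ uniformly in $\theta \leq \theta_0$.

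For Part 2, I will identify $\phi^*_X$ with the UMP test of Corollary \ref{OneSide1}. Because $p(\theta_0,Z) = 1 - G_{\theta_0}(Z)$ is continuous and strictly decreasing in $Z$, there is a threshold $c_\alpha := G_{\theta_0}^{-1}(1-\alpha)$ such that $\{p(\theta_0,Z) \leq \alpha\} = \{Z \geq c_\alpha\}$. Conditioning on $X$ and using $Z-X \mid X \sim \mathrm{Tulap}(0,b,q) \stackrel{d}{=} N$,
\begin{equation*}
\phi^*_X = P(Z \geq c_\alpha \mid X) = 1 - F_N(c_\alpha - X) = F_N(X - c_\alpha),
\end{equation*}
where the last equality uses the symmetry of $N$ about $0$ (Lemma \ref{TulapLem}). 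This is exactly the form of the UMP-$\alpha$ test from Corollary \ref{OneSide1} for $H_0: \theta \leq \theta_0$ versus $H_1:\theta > \theta_0$, provided the level calibration matches; and indeed $E_{\theta_0}\phi^*_X = P_{\theta_0}(p(\theta_0,Z) \leq \alpha) = \alpha$ by Part 1. Since the map $\phi^*_X = F_N(X - c_\alpha)$ belongs to $\mscr D^n_{\ep,\de}$ and has size $\alpha$, Corollary \ref{OneSide1} identifies it as UMP.

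For Part 3, I will verify that Algorithm \ref{PValueAlgorithm} evaluates the conditional probability $P(X+N \geq z \mid Z=z)$ at the observed $z$ by a direct convolution: since $X$ and $N$ are independent with $X \sim \mathrm{Binom}(n,\theta_0)$ and $N \sim \mathrm{Tulap}(0,b,q)$,
\begin{equation*}
p(\theta_0, z) = \sum_{k=0}^n \binom{n}{k}\theta_0^k(1-\theta_0)^{n-k}\bigl(1 - F_N(z - k)\bigr),
\end{equation*}
which is exactly the quantity the algorithm tabulates from the closed-form Tulap CDF of Definition \ref{TulapDefn}. The main obstacle in the whole proof is Part 2: one must handle the boundary case carefully since $\phi^*_x$ involves a randomized decision and the threshold $c_\alpha$ need not align with any integer, but continuity of $F_N$ (hence of $G_{\theta_0}$) removes any ambiguity, which is why introducing the uniform component $U$ in the Tulap construction pays off here.
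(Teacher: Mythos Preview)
Your proof is correct and follows essentially the same route as the paper: probability integral transform plus binomial MLR for Part~1, identifying the $p$-value threshold $\{p(\theta_0,Z)\le\alpha\}=\{Z\ge c_\alpha\}$ with the UMP cutoff and using symmetry of $N$ to recover the form $F_N(X-c_\alpha)$ for Part~2, and a direct convolution for Part~3. The only spot to tighten is Part~3: your expression $\sum_k \binom{n}{k}\theta_0^k(1-\theta_0)^{n-k}\bigl(1-F_N(z-k)\bigr)$ agrees with Algorithm~\ref{PValueAlgorithm}'s output $\sum_k \binom{n}{k}\theta_0^k(1-\theta_0)^{n-k}F_N(k-z)$ only after one more use of the symmetry $F_N(-t)=1-F_N(t)$ that you already invoked in Part~2, so state it explicitly there.
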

\begin{algorithm}
\caption{UMP one-sided $p$-value for binomial data under $(\ep,\de)$-DP}
\scriptsize
INPUT: $n\in\NN$, $\ta_0\in (0,1)$, $\ep>0$, $\de\geq 0$,  $Z\sim \mathrm{Tulap}\l(X,b=e^{-\ep},q=\frac{2\de b}{1-b+2\de b}\r)$,% for some $X\in \ZZ$.
\begin{algorithmic}[1]
  \setlength\itemsep{0em}
  \STATE Set $F_N$ as the cdf of $N\sim \mathrm{Tulap}(0,b,q)$
\STATE Set $\ul F = (F_N(0-Z),F_N(1-Z),\ldots, F_N(n-Z))^\top$
\STATE Set $\ul B = (\binom n0 \ta_0^0(1-\ta_0)^{n-0}, \binom n1 \ta_0^1(1-\ta_0)^{n-1},\ldots, \binom nn \ta_0^n (1-\ta_0)^{n-n})^\top$
\end{algorithmic}
OUTPUT: $\ul F^\top \ul B$
\label{PValueAlgorithm}
\end{algorithm}
 Theorem \ref{PValueResult} tells us that $p(\ta_0,Z)$ is the smallest possible $p$-value for the hypothesis test $H_0: \ta\leq \ta_0$ versus $H_1: \ta>\ta_0$ under $(\ep,\de)$-DP. Note that $1-p(\ta_0,Z) = P(X+N\leq Z\mid Z)$ is the $p$-value for $H_0: \ta\geq \ta_0$ versus $H_1: \ta<\ta_0$ which agrees with the UMP-$\al$ test in Corollary \ref{OneSide1}.

%%%%%%%%%%%%%%%%%%%%%%%%%%%%%%%%%%%%%%%%%%%%%%%%%%%%%%%%%%%%
%%%   CONFIDENCE INTERVALS
%%%%%%%%%%%%%%%%%%%%%%%%%%%%%%%%%%%%%%%%%%%%%%%%%%%%%%%%%%%%
%%%%%%%%%%%%%%%%%%%%%%%%%%%%%%%%%%%%%%%%%%%%%%%%%%%%%%%%%%%%
%%%    Application to Distribution-free Inference
%%%%%%%%%%%%%%%%%%%%%%%%%%%%%%%%%%%%%%%%%%%%%%%%%%%%%%%%%%%%
\section{Application to distribution-free inference}\label{DistributionFree}
In this section, we show how our UMP tests for count data can be used to test certain hypotheses for continuous data. In particular, we give a DP version of the sign and median test allowing one to test the median of either paired or independent samples. First we will recall the sign and median tests (see \citet[Sections 5.4 and 6.4]{Gibbons2014} for a more thorough introduction).

{\bfseries Sign test:}
 Suppose we observe $n$ iid pairs $(X_i,Y_i)$ for $i=1,\ldots, n$. Then for all $i=1,\ldots,n$, $X_i \overset d= X$ and $Y_i \overset d =Y$ for some random variables $X$ and $Y$. We assume that for any pair $(X_i,Y_i)$ we can determine if $X_i>Y_i$ or not. For simplicity, we also assume that there are no pairs with $X_i=Y_i$. Denote the unknown probability $p = P(X>Y)$. We want to test a hypothesis such as $H_0: p=.5$ versus $H_1: p>.5$. The sign test uses the test statistic $T = \#\{X_i>Y_i\}$.

{\bfseries Median test:}
Suppose we observe two independent sets of iid data $\{X_i\}_{i=1}^n$ and $\{Y_i\}_{i=1}^n$, where all $X_i$ and $Y_i$ are distinct values, and we have a total ordering on these values. Then there exists random variables $X$ and $Y$ such that  $X_i\overset d =X$ and $Y_i \overset d = Y$ for all $i$. We want to test $H_0: \mathrm{median}(X)\leq \mathrm{median}(Y)$ versus $H_1: \mathrm{median}(X)>\mathrm{median}(Y)$. The median test uses the test statistic $T = \#\{i \mid \mathrm{rank}(X_i) > n\}$, where $\mathrm{rank}(X_i) = \#\{X_j\leq X_i\}+\#\{Y_j\leq X_i\}$.

{\bfseries Privatized tests:} In either setting, under the null hypothesis, the test statistic is distributed as $T \sim \mathrm{Binom}(n,.5)$. In both cases, $T$ satisfies the requirements of the statistic in Theorem \ref{TulapDP}, so $T+N$ satisfies $(\ep,\de)$-DP, where $N\sim \mathrm{Tulap}(0,b,q)$ for $b=e^{-\ep}$ and $q = \frac{2\de b}{1-b-2\de b}$. Using Algorithm \ref{PValueAlgorithm}, we  obtain a private $p$-value for either the sign test or the median test as a post-processing of $T+N$. 

%%%%%%%%%%%%%%%%%%%%%%%%%%%%%%%%%%%%%%%%%%%%%%%%%%%%%%%%%%%%
%%% SIMULATIONSS
%%%%%%%%%%%%%%%%%%%%%%%%%%%%%%%%%%%%%%%%%%%%%%%%%%%%%%%%%%%%
 \section{Simulations}\label{Simulations}
 In this section, we study both the empirical power and the empirical type I error of  our DP-UMP test against the normal approximation proposed by \citet{Vu2009}. For our simulations, we focus on small samples as the noise introduced by DP methods is most impactful in this setting.% In our simulations we study both the empirical power  and the empirical type I error of the tests.

In Figure \ref{fig:Power}, we plot the empirical power of our UMP test, the Normal Approximation from \citet{Vu2009}, and the non-private UMP. For each $n$, we generate 10,000 samples from $\mathrm{Binom}(n,.95)$. We  privatize each $X$ by adding $N\sim\mathrm{Tulap}(0,e^{-\ep},0)$ for the DP-UMP and $L\sim \mathrm{Lap}(1/\ep)$ for the Normal Approximation. We compute the UMP $p$-value via Algorithm \ref{PValueAlgorithm} and the approximate $p$-value for $X+L$, using the cdf of $N\l(X, n/4+2/\ep^2\r)$. The empirical power is given by $(10000)^{-1}\#\{\text{$p$-value$<.05$}\}$. The DP-UMP test  indeed gives higher power compared to the Normal Approximation, but the approximation does not lose too much power, however next we see that  type I error is another issue.

In Figure \ref{fig:TypeIerror} we plot the empirical type I error of the DP-UMP and the Normal Approximation tests. We fix $\ep=1$ and $\de=0$, and vary $\ta_0$. For each $\ta_0$, we generate 100,000 samples from $\mathrm{Binom}(30,\ta_0)$. For each sample, we compute the DP-UMP and Normal Approximation tests at size $\al=.05$. We plot the proportion of times we reject the null as well as moving average curves. The DP-UMP, which is provably at size $\al=.05$ achieves type I error very close to $.05$, but the Normal Approximation has a higher type I error for small values of $\ta_0$, and a lower type I error for large values of $\ta_0$. %This shows us that when using the Normal Approximation, the type I error input into the method is not necessarily the actual probability of rejecting when the null is true.
\begin{minipage}{.48\linewidth}
  \includegraphics[width = \linewidth]{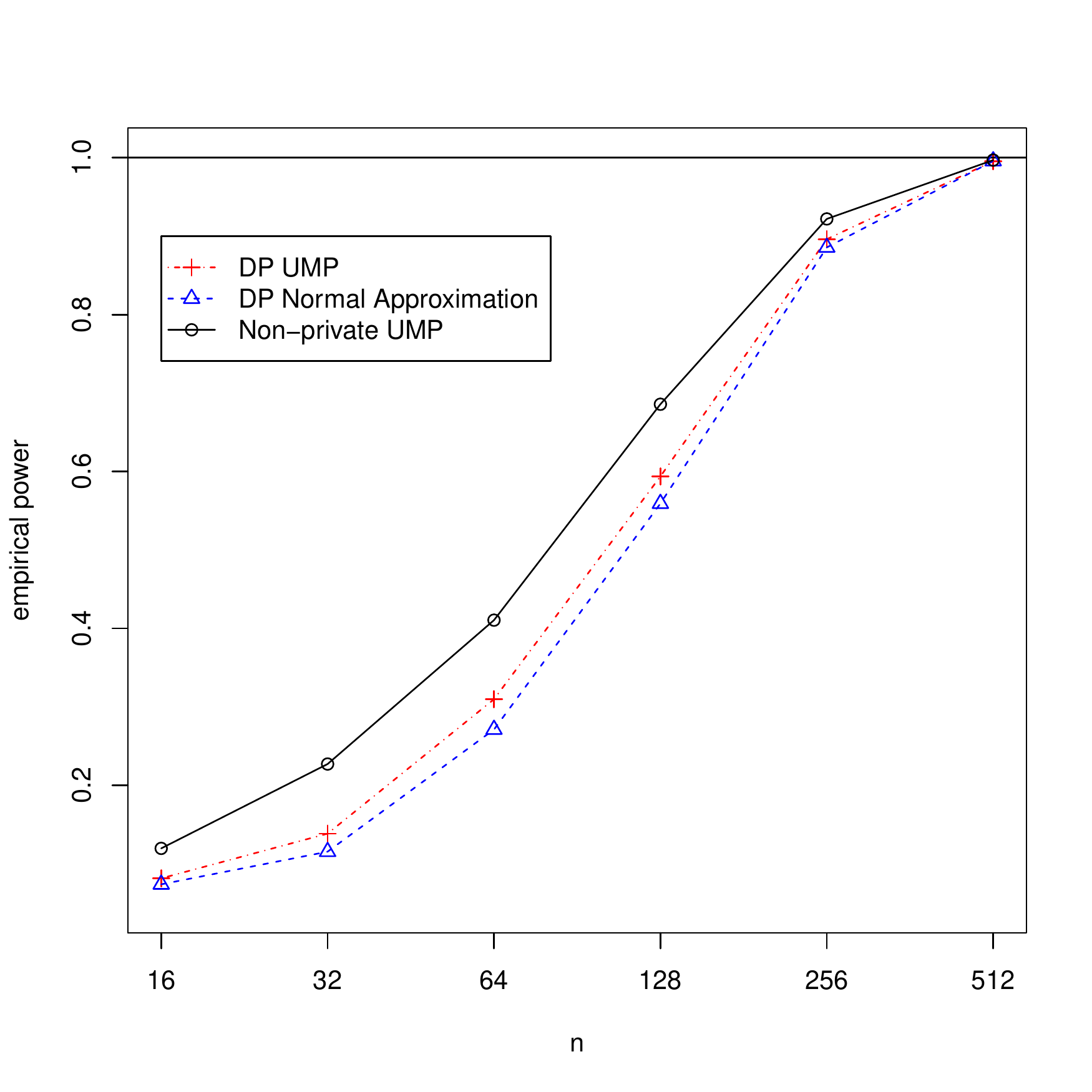}
  \captionof{figure}{Empirical power for UMP and Normal Approximation tests for $H_0: \theta\leq.9$ versus $H_1: \theta\geq .9$. The true value is $\theta = .95$. $\ep=1$ and $\de=0$. $n$ varies along the $x$-axis.}% Curves are an average of 10,000 simulated samples.}
  \label{fig:Power}
\end{minipage}
\hspace{.02\linewidth}
\begin{minipage}{.48\linewidth}
  \includegraphics[width = \linewidth]{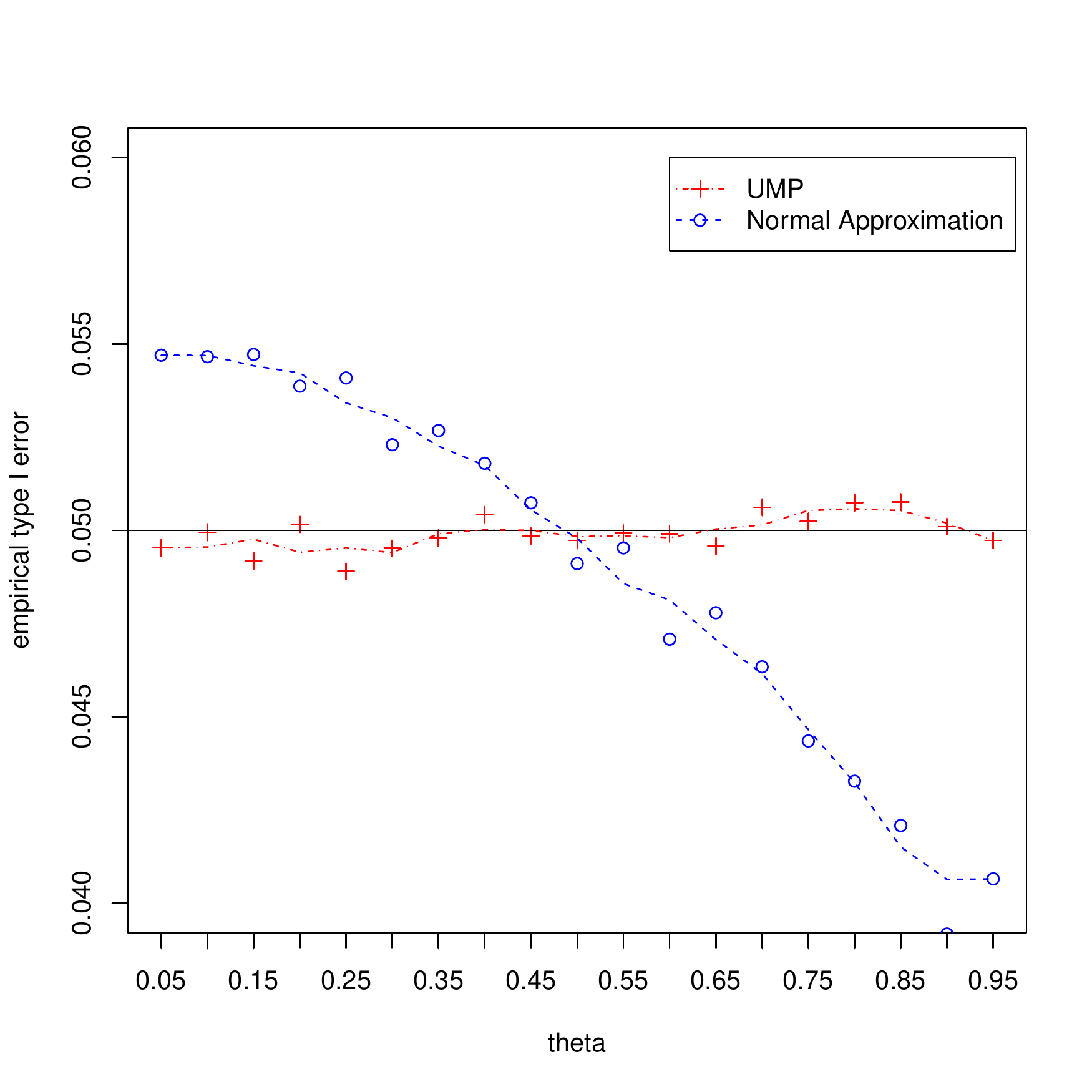}
  \captionof{figure}{Empirical type I error $\al$ for UMP and Normal Approximation tests for $H_0: \theta \leq \ta_0$ versus $H_1: \theta \geq \theta_0$.  $\ta_0$ varies along the $x$-axis. $n=30$, $\ep=1$, and $\de=0$. Target is  $\al=.05$.}% Curves are an average of 100,000 simulated samples.}
  \label{fig:TypeIerror}
\end{minipage}
 %%%%%%%%%%%%%%%%%%%%%%%%%%%%%%%%%%%%%%%%%%%%%%%%%%%%%%%%%%%%
 %%% DISCUSSION
 %%%%%%%%%%%%%%%%%%%%%%%%%%%%%%%%%%%%%%%%%%%%%%%%%%%%%%%%%%%%
 \section{Discussion and future directions}
 In this paper, we  derived the uniformly most powerful test for binary data among all DP $\alpha$-level tests. Previously, while various hypothesis tests under DP have been proposed, none have satisfied such an optimality criterion. Furthermore, since our UMP tests can be achieved by post-processing a noisy sufficient statistic, we are able to produce optimal $p$-values as well. Our results can also be applied to distribution-free tests,  to test some hypotheses about continuous data under DP.

 %While the focus of this work has been on  hypothesis testing, 
 These results can also be applied to obtain optimal length confidence intervals. In fact, the $p$-value function for the test $H_0: \ta\geq \ta_0$ versus $H_1: \ta<\ta_0$ is a cdf which generates a confidence distribution (See \citet{Xie2013} for a review). Since this $p$-value corresponds to a UMP test,  this confidence distribution is stochastically more concentrated than any other confidence distribution.  In the full paper, we  explore this idea, demonstrating that our results lead to optimal length confidence intervals.

We showed that for exchangeable data, optimal DP tests are functions of the empirical distribution. For binary data, the empirical distribution is equivalent to the sample sum, which is complete sufficient for the binomial model. However, in general it is not clear whether optimal DP tests are always a function of complete sufficient statistics as is the case for classical UMP tests. It would be worth investigating whether there is alternative notion of sufficiency which applies for DP tests.

 When $\de=0$, our optimal noise adding mechanism is related to the discrete Laplace distribution, which \citet{Ghosh2009} and \citet{Geng2016} also found was optimal for a general class of loss functions. For $\de>0$, a truncated discrete Laplace distribution was optimal for our problem. Little previous work has looked into optimal noise adding mechanisms for approximate DP. \citet{Geng2016:Approximate} studied this problem to some extent, but did not explore truncated Laplace distributions. \citet{Steinke2018} discusses that truncated Laplace can be viewed as the canonical distribution for approximate DP as Laplace is canonical for pure DP. We propose to further explore the use of truncated Laplace distributions in approximate DP setting.

%\section{Appendix: Technical Details}

%\lstinputlisting{./code/funRegrExamples.R}

%\bibliographystyle{../../../Bibliography/awan} % requires package `cite' not BibTex.
 \bibliographystyle{plainnat}
 % \bibliography{../../../Bibliography/DataPrivacyBib}{}
 \bibliography{./DataPrivacyBib}{}
%%%%%%%%%%%%%%%%%%%%%%%%%%%%%%%%%%%%%%%%%%%%%%%%%%%%%%%%%%%%
%%%PROOFS
%%%%%%%%%%%%%%%%%%%%%%%%%%%%%%%%%%%%%%%%%%%%%%%%%%%%%%%%%%%%

%\pagebreak
\section{Detailed Proofs}
\begin{proof}[Proof of Theorem \ref{SuffStatThm}.]
  First note that $\phi_{\pi(\ul X)}$ satisfies $(\ep, \de)$-DP for all $\pi \in \sa(n)$, and that $\int \phi_{\pi(\ul X)} \ d\mu_i= \int\phi_{\ul X}\ d\mu_i$. Then
  \[\int\phi'_{\ul X}\ d\mu_i = \int \frac 1{n!} \sum_{\pi \in \sa(n)} \phi_{\pi(\ul X)}\ d\mu_i = \frac 1{n!} \sum_{\pi \in \sa(n)} \int \phi_{\pi(\ul X)}\ d\mu_i = \frac 1{n!} \sum_{\pi \in \sa(n)} \int\phi_{\ul X}\ d\mu_i = \int \phi_{\ul X}\ d\mu_i.\]
  To see that $\phi'$ satisfies $(\ep,\de)$-DP, we check condition \eqref{DPBinary}:
  \[\phi'_{\ul X} = \frac{1}{n!} \sum_{\pi \in \sa(n)} \phi_{\pi (\ul X)}
    \leq \frac{1}{n!} \sum_{\pi \in \sa(n)} (e^\ep \phi_{\pi(\ul X')} + \de)
    = \frac{1}{n!} \sum_{\pi \in \sa(n)} e^\ep \phi_{\pi(\ul X')} + \frac{1}{n!} \sum_{\pi \in \sa(n)} \de
    = \phi'_{\ul X'} + \de\]
  \begin{align*}
    (1- \phi'_{\ul X})=\l(1- \frac{1}{n!} \sum_{\pi \in \sa(n)} \phi_{\pi(\ul X)}\r)
    &= \frac{1}{n!} \sum_{\pi \in \sa(n)} (1- \phi_{\pi(\ul X)})\\
    &\leq \frac{1}{n!} \sum_{\pi \in \sa(n)} \l(e^\ep (1- \phi_{\pi(\ul X')}) + \de\r)
    =e^\ep \l(1- \phi'_{\pi (\ul X')}\r) + \de
    \end{align*}
\end{proof}
\begin{proof}[Proof of Lemma \ref{TulapLem}.]
  \begin{enumerate}
\item  We know that $L\overset d= G_1-G_2$, as shown in \citet{Inusah2006}. Let $f_U(\cdot)$ denote the pdf of $U$, and $F_U$ denote the cdf of $U$. We will use the property that $f_U(x)=f_U(-x)$ and $F_U(-x) =1-F_U(x)$. Then the pdf of $L+U$ is 
  \begin{align*}
    f_{L+U}(x) &= f_U(x-[x])\l(\frac{1-b}{1+b}\r) b^{|[x]|}= \begin{cases}
f_U(x-[x]) \l(\frac{1-b}{1+b}\r)b^{-[x]}&[x]\leq 0\\
f_U(x-[x])\l(\frac{1-b}{1+b}\r) b^{[x]}&[x]>0.
\end{cases}
  \end{align*}
If $[x]\leq 0$, then we have 
\begin{align*}
  F_{L+U}(x) &= \int_{-\infty}^x f_U(t-[t]) \l( \frac{1-b}{1+b}\r) b^{-[t]} \ dt\\
&= \int_{-\infty}^{[x]-1/2} f_U(t-[t])\l( \frac{1-b}{1+b}\r) b^{-[t]} \ dt + \int_{[x]-1/2}^xf_U(t-[x])\l( \frac{1-b}{1+b}\r) b^{-[x]} \ dt\\
&= \sum_{t=-\infty}^{[x]-1}\l(\frac{1-b}{1+b}\r) b^{-[t]} +  \int_{[x]-1/2}^xf_U(t-[x])\l( \frac{1-b}{1+b}\r) b^{-[x]} \ dt\\
&= \frac{b^{-[x]+1}}{1+b}+F_U(x-[x])\l(\frac{1-b}{1+b}\r) b^{-[x]}\\&= \frac{b^{-[x]}}{1+b}(b+F_U(x-[x])(1-b))
\end{align*}
Since, $L+U$ is symmetric about zero, as both $L$ and $U$ are symmetric about zero, we know that $F_{L+U}(-x) = 1-F_{L+U}(-x)$. The rest follows by replacing $x$ with $x-m$, and $F_U(x) = x+1/2$.
\item If $q=0$, then by part 1., it is clear that the output of Algorithm \ref{SampleTulap} has the correct distribution. If $q>0$, then by rejection sampling, we have that $N\sim \mathrm{Tulap}(m,b,q)$. For an introduction to rejection sampling, see \citet[Chapter 11]{Bishop2006}.
\item This property follows immediately from 1., and that $\mathrm{Tulap}(m,b,q)$ is truncated equally on both sides of $m$.
\end{enumerate}
\end{proof}

\begin{proof}[Proof of Lemma \ref{CLem}.]
The form of the cdf at integer values is easily verified from Lemma \ref{TulapLem}. 
It is clear that $C(m)$ is positive. It is also clear that $C(m)$ is continuous and monotone decreasing for all $m\in \RR\setminus \{z+1/2\mid z\in \ZZ\}$. So, we will check that $C$ is continuous at $m=z+1/2$ for $z\in \ZZ$. 
\[\lim_{\ep \downarrow 0} (1+b) C(z+1/2+\ep) = \lim_{\ep \downarrow 0} b^{z+1}(b+(1-\ep)(1-b)) = b^{z+1}\]
\[\lim_{\ep \uparrow 0} (1+b)C(z+1/2-\ep) = \lim_{\ep \uparrow 0} b^{z}(b+\ep(1-b)) = b^{z+1}\]
Since $C$ is continuous on $\RR$ and monotone decreasing almost everywhere, it follows that $C$ is monotone decreasing on $\RR$ as well.

Call $\al(m) = [m]-m+1/2$, which lies in $[0,1]$. Note that $\al(-m) = -[m]+m+1/2=1-\al(m)$. Then 
\begin{align*}
  (1+b)b^{-[m]}C(m)&=b+\al(m)(1-b)
=b+(1-\al(-m)(1-b))
=b+(1-b)-\al(-m)(1-b)\\
&=(b+1)-(b+\al(-m)(1-b))
=(b+1)(1-b^{[m]}C(m))\qedhere
\end{align*}
\end{proof}

\begin{proof}[Proof of Lemma \ref{RecurrenceLem1}.]
First we show that 1 and 2 are equivalent. Clearly the $m$ is the same for both. We must show that for $p\in (0,1)$, $e^\ep p\leq 1-e^{-\ep}(1-p)$ whenever $p\leq \frac{1}{1+e^{\ep}}$ and $e^{\ep}p>1-e^{-\ep}(1-p)$ when $p>\frac{1}{1+e^\ep}$. Setting equal $e^\ep p = 1-e^{-\ep}(1-p)$ we find that $p =\frac{1}{1+e^\ep}$. As $p\rightarrow 1$, we have that $e^\ep p > 1- e^{-\ep}(1-p)$ and as $p\rightarrow 0$, we have $e^\ep p <1-e^{-\ep}(1-p)$. We conclude that 1 and 2 are equivalent.

Next we show that 2 and 3 are equivalent. First we show that $F_{N_0}(x-m)$ satisfies the recurrence relation in 2. Set $b=e^{-\ep}$.
\[F_{N_0}([m]-1-m) = b^{-[m]+1}b^{[m]} \frac{(b+([m]-m+1/2)(1-b))}{1+b}\leq \frac{b}{1+b}=\frac{1}{1+e^\ep}\]
\[F_{N_0}([m]-m) = b^{-[m]}b^{[m]}\frac{(b+([m]-m+1/2)(1-b))}{1+b} \geq \frac{b}{1+b} = \frac{1}{1+e^\ep}.\]
Now, let $t\in \ZZ$ and check three cases:
\begin{itemize}
\item Let $t< [m]$, then $e^\ep F_{N_0}(t-m)=e^\ep b^{-t}C(m)=b^{-(t+1)}C(m) = F_{N_0}(t+1-m)$.
\item Let $t>=[m]$. Using Lemma \ref{CLem}, 
$1-e^{-\ep}(1-F_{N_0}(t-m)) 
= 1-b(1-b^{-[m]}C(m))
=1-b+b(1-b^{[m]}C(-m))
=1-b+b-b^{[m+1]}C(-m)
=F_{N_0}(t+1-m)$.
\item Let $t>m$. Then 
$1-e^{-\ep}(1-F_{N_0}(t-m))
=1-b(b^{t}C(-m))
=1-b^{t+1}C(-m)
=F_{N_0}(t+1-m)$.
\end{itemize}
Finally, for any value $c\in (0,1)$, we can find $m$ such that $F_{N_0}(0-m)=c$, by the intermediate value theorem. On the other hand, given $m$, set $\phi_0 = F_{N_0}(0-m)$.
\end{proof}

\begin{proof}[Proof of Lemma \ref{BingLem}.]
  Note that $(\phi_1 - \phi_2)(g-kf)\geq 0$ for almost all $x\in \mscr X$ (with respect to $\mu$). This implies that $\int (\phi_1 - \phi_2)(g-kf) \ d\mu \geq 0$. Hence, $\int \phi_1 g \ d\mu - \int \phi_2 g \ d\mu \geq k[\int \phi_1 f\ d\mu - \int \phi_2 f \ d\mu] \geq 0$.
\end{proof}

\begin{proof}[Proof of Theorem \ref{UMP1}.]
 First note that $\phi^*\in \mscr D_{\ep,0}^n$, since by Lemma \ref{RecurrenceLem1}, $\phi_x^* =\min\{e^\ep \phi^*_{x-1},1-e^{-\ep}(1-\phi^*_{x-1})\}$. So, $\phi^*$ satisfies \eqref{DP1}-\eqref{DP4}. Next, since by Lemma \ref{CLem}, $F_{N_0}(x-m)$ is a continuous, decreasing function in $m$ with $\lim_{m\uparrow \infty} F_{N_0}(x-m)=0$ and $\lim_{m\downarrow -\infty} F_{N_0}(x-m)=1$, we can find $m$ such that $E_{\ta_0} \phi^*_x=\al$ by the Intermediate Value Theorem. 

Now that we have argued that $\phi^*$ is a valid test, the rest of the result is an application of Lemma \ref{BingLem}. It remains to show that the assumptions are satisfied for the lemma to apply. Let $\phi\in \mscr D_{\ep,0}^n$ such that $E_{\ta_0} \phi_x\leq \al$. 

We claim that either $\phi_x=\phi^*_x$ for all $x\in \{0,1,2,\ldots, n\}$ or there exists $y$ such that $\phi_y>\phi^*$. To the contrary, suppose that $\phi^*_x\leq \phi_x$ for all $x$. Then either $\phi^*=\phi$ for all $x$, or there exists $z$ such that $\phi^*_z<\phi_z$. But, then $E_{\ta_0} \phi_x^*<E_{\ta_0} \phi_x\leq \al$ since the pmf of $\mathrm{Binom}(n,\ta)$ is nonzero for all $x\in \{0,1,2,\ldots,n \}$ contradicting the fact that $E_{\ta_0} \phi^*_x=\al$. We conclude that there exists $y$ such that $\phi^*_y >\phi_y$. 

Let $y$ be the smallest point in $\{0,1,2,\ldots, n\}$ such that $\phi^*_y>\phi_y$. We claim that for all $x\geq y$, we have $\phi^*_x\geq \phi_x$. We already know that for $y=x$, the claim holds. For induction, suppose the claim holds for some $x\geq y$. By Lemma \ref{RecurrenceLem1}, we know that $\phi^*_{x+1} = \min\{e^{\ep} \phi^*_x, 1-e^{-\ep} (1-\phi^*_x)\}$, and by constraints \eqref{DP1}-\eqref{DP4}, we know that $\phi_{x+1} \leq \min\{e^{\ep} \phi_x, 1-e^{-\ep}(1-\phi_{x})\}$. 
\begin{itemize}
\item Case 1: We have $\phi_x^*\leq \frac{1}{1+e^{\ep}}$. Then by Lemma \ref{RecurrenceLem1},
$\phi^*_{x+1} = e^\ep \phi^*_x \geq e^\ep \phi_x\geq \phi_{x+1}.$
\item Case 2: We have $\phi_x^*> \frac{1}{1+e^\ep}$. Then by Lemma \ref{RecurrenceLem1},
$\phi^*_{x+1} = 1-e^{-\ep}(1-\phi_x^*) \geq 1-e^{-\ep}(1-\phi_x)\geq \phi_{x+1}.$
\end{itemize}
 We conclude that $\phi^*_{x+1}\geq \phi_{x+1}$. By induction, the claim holds for all $x\in \{y,y+1,y+2,\ldots, n\}$. So, we have that $\phi^*_x\geq \phi_x$ for $x\in \{y,y+1,y+2,\ldots, n\}$ and $\phi^*_x\leq \phi_x$ for $x\in \{0,1,2,\ldots, y-1\}$. Since Binomial has a monotone likelihood ratio, by Lemma \ref{BingLem} we have that $E_{\ta_1} \phi^*_x \geq E_{\ta_1}\phi_x$. We conclude that $\phi^*$ is UMP-$\al$ among $\mscr D_{\ep,0}^n$ for the stated hypothesis test.
\end{proof}

\begin{proof}[Proof of Corollary \ref{OneSide1}.]
  Since $\phi^*_x$ and $\mathrm{Binom}(n,\ta)$ has a monotone likelihood ratio in $\ta$, $E_\ta \phi_x^*\leq E_{\ta_0}\phi_x=\al$ for all $\ta\leq \ta_0$ (property of MLR). So, $\phi^*$ is size $\al$. By Theorem \ref{UMP1}, we know that $\phi_x^*$ is most powerful for any alternative $\ta_1>\ta_0$ versus the null $\ta_0$. So, $\phi^*$ is UMP-$\al$. 

  Set $\ta_0'=1-\ta_0$ and pick $\ta_1<\ta_0$ and set $\ta'_1=1-\ta_1$. Finally set $X' = n-X$. We know given $X'\sim \mathrm{Binom}(n,\ta')$, the UMP-$\al$ among $\mscr D_{\ep,0}^n$ for the hypothesis $H_0: \ta'=\ta_0'$ versus $H_1: \ta'=\ta_1$ (note $\ta_1'>\ta_0'$) is $\phi^*_{x'} = F_{N_0}(x'-m)$ for some $m$ such that $E_{x' \sim \ta_0'}  \phi^*_x=\al$. We propose the rule $\psi^*_x = F_{N_0}(n-x-m) = 1-F_{N_0}(m+x-n)=1-F_{N_0}(x-m')$, where $m'$ is chosen such that $E_{x\sim \ta_0} \psi^*_x=\al$. The test $\psi^*$ is equivalent to the test $ \phi^*$, which we know is UMP-$\al$.
\end{proof}

\begin{proof}[Proof of Lemma \ref{RecurrenceLem2}.]
  We will abbreviate $F(x)\defeq F_{N_0}(x-m)$, where $N_0 \sim \mathrm{Tulap}(0,b=e^{-\ep},0)$ to simplify notation. First we will show that 1 and 2 are equivalent. It is clear that $y$ and $m$ are the same in both. Next consider 
$1-e^{-\ep}(1-p) + e^{-\ep}\de = e^\ep p+de$, solving for $p$ gives $p=\frac{1-\de}{1+e^{\ep}}$. Considering as $p\rightarrow 0$ and $p\rightarrow 1$, we see that $1-e^{-\ep}(1-p)+e^{-\ep} \de \geq e^\ep p + \de$ when $p \leq \frac{1-\de}{1+e^{\ep}}$ and $1-e^{-\ep}(1-p)+e^{-\ep} \de \leq e^\ep p + \de$ when $p \geq \frac{1-\de}{1+e^{\ep}}$. 

Next solving $1-e^{-\ep}(1-p)+e^{-\ep}\de=1$ for $p$ gives $p=1-\de$. So, $1-e^{-\ep}(1-p)+e^{-\ep}\de\leq 1$ when $p\leq 1-\de$ and $1-e^{-\ep}(1-p)+e^{-\ep}\de\geq$ when $p\geq 1-\de$. Lastly, solving $e^{\ep}p+\de=1$ for $p$ gives $p = \frac{1-\de}{e^{\ep}}\geq \frac{1-\de}{1+e^{\ep}}$. Combining all of these comparisons, we see that 1 is equivalent to 2.

Before we justify the equivalence of 2 and 3, we argue the following claim. Let $\phi_x$ be defined as in 3. Then $\phi_x\leq\frac{1-\de}{1+e^{\ep}}$ if and only if $F(x)\leq\frac{1}{1+e^{\ep}}$. Suppose that $\phi_x \leq \frac{1-\de}{1+e^{\ep}}$. Then $\frac{F(x)-q/2}{1-q} \leq \frac{1-\de}{1+e^{\ep}}$. Then we have that
\begin{align*}
F(x)&\leq \frac{(1-q)(1-\de)}{1+e^{\ep}} + \frac q2\\
 &=\frac{1}{1+e^{\ep}}\l[(1-q)(1-\de) + \l(\frac{b+1}{b}\r) \frac{q}{2}\r]\\
 &=\frac{1}{1+e^{\ep}} \l[\frac{(1-b)(1-\de)}{1-b+2\de b} + \l(\frac{b+1}{b}\r)\frac{\de b}{1-b+2\de b}\r]\\
 &=\frac{1}{1+e^{\ep}} \l[1-b+2 \de b]^{-1}[(1-b)(1-\de) + (b+1)\de\r]\\
 &=\frac{1}{1+e^{\ep}}
 \end{align*}
We are now ready to show that $\phi_x$ as described in 3 fits the form of 2. 
\begin{itemize}
\item Suppose that $0<\phi_x<\frac{1-\de}{1+e^{\ep}}$. By the above, we know that $F(x)\leq \frac{1}{1+e^{\ep}}$. Then by Lemma \ref{RecurrenceLem1}
\[e^\ep \phi_x + \de = \frac{e^{\ep} F(x) - \frac{q}{2b}}{1-q} + \de
=\frac{F(x+1) - \frac{q}{2}}{1-q} + \frac{\frac{q}{2}-\frac{q}{2b}}{1-q}+\de
=\phi_{x+1} + \frac{\de b}{1-b}\l(1-\frac{1}{b}\r) + \de
=\phi_{x+1}\]
\item Suppose that $\frac{1-\de}{1+e^{\ep}}<\phi_x \leq 1-\de$. Then we have $F(x) >\frac{1}{1+e^{\ep}}$. Then 
  \begin{align*}
    1-e^{-\ep}(1-\phi_x) + e^{-\ep} \de
&=1-e^{-\ep}\l(1-\frac{F(x) - q/2}{1-q}\r) + e^{-\ep} \de\\
%&=\frac{1-q}{1-q} - e^{-\ep} \l( \frac{1-F(x)}{1-q} - \frac{q/2}{1-q}\r) + e^{-\ep} \de\\
&= (1-q)^{-1}[1-e^{-\ep}(1-F(x)) + bq/2-q] + b\de\\
&= (1-q)^{-1}[F(x+1) - q/2] + \frac{(b-1)q/2}{1-q} + b\de\\
&= \phi_{x+1} + \frac{\de b(b-1)}{1-b} + b\de\\
&= \phi_{x+1}
  \end{align*}
\item Finally, we must show that if $\phi(x)=1$ then $\phi(x-1)\geq 1-\de$. It suffices to show that $F(x)\geq 1-q/2$ implies that $F(x-1) \geq (1-\de)(1-q)+q/2=1-(1/b)(q/2)$. We prove the contrapositive. Suppose that $F(x-1)<1-(1/b)(q/2)$. Then since $F$ satisfies property \eqref{DP3}, we know that
  \begin{align*}
    F(x) \leq 1-e^{-\ep}(1-F(x-1))&< 1-b(1-(1-(1/b)(q/2)))\\
    &= 1-b(1-1+(1/b)(q/2)) = 1-q/2.\end{align*}
    \end{itemize}

We have justified that $\phi_x$ in 3 satisfies the recurrence relation in 2. Given $\phi'$ according to 2 with first non-zero entry at $y$, by Lemma \ref{CLem} and Intermediate Value Theorem, we can find $m\in \RR$ such that $\phi_y=\phi'_y$. We conclude that 1,2, and 3 are all equivalent.
\end{proof}

\begin{proof}[Proof of Lemma \ref{CDFLem}.]
  Let $\al \in [0,1]$ be given. We will only consider $B\subset \RR$ (Lebesgue measurable) such that $\mu_{X'}(B)=\al$. Then demonstrating $(\ep,\de)$-DP requires $\sup\limits_{\{B\mid \mu_{X'}(B)=\al\}} \mu_X(B) \leq e^{\ep} \al + \de$. We interpret this problem as testing the hypothesis $H_0: X'$ versus $H_1: X$, using the rejection region $B$, where $\al$ is the type I error, and $\mu_X(B)$ is the power. We know that $\sup\limits_{\{B\mid \mu_{X'}(B)=\al\}} \mu_X(B)$ is achieved by the Neyman-Pearson Lemma. Since $\mu_X$ has an MLR in $T(X)$, $\arg\sup_{B\mid \mu_{X'}(B)=\al} \mu_X(B)$ is of the form of either $(-\infty,t)$ or $(t,\infty)$ depending on whether $T(X)$ is greater or lesser than $T(X')$. Note that since $\mu_X$ is dominated by Lebesgue measure for all $X$, $\mu_X((-\infty,t))$ is continuous in $t$, which allows us to achieve exactly $\al$ type I error.
\end{proof}

\begin{proof}[Proof of Theorem \ref{TulapDP}.]
Let $Z| X \sim \mathrm{Tulap}\l(T(X),b=e^{-\ep},\frac{2\de b}{1-b+2\de b}\r)$. We know that the distribution of $Z$ is symmetric with location $T(X)$, and the pdf $f_Z(t)$ is increasing as a function of $|t-T(X)|$. It follows that $f_Z(t)$ has a MLR in $T(X)$. By Lemma \ref{RecurrenceLem2}, we know that $\phi_x=F_Z(m)$ satisfies \eqref{DP1}-\eqref{DP4}, so by Lemma \ref{CDFLem}, we have the desired result. %(BE CAREFUL HERE.)
\end{proof}

\begin{proof}[Proof of Theorem \ref{PValueResult}.]
  Recall that a $p$-value for $H_0: \ta\in \Ta_0$ versus $H_1: \ta\not \in \Ta_0$, given $X\sim f_\ta$ is a real-valued random variable $p(X)\in [0,1]$ such that for every $\ta\in \Ta_0$ and every $0<\al<1$, $P_{X\sim f_\ta}(p(X) \leq \al)\leq \al$.
  \begin{enumerate}
  \item First we show that $p(\ta_0,Z)$ is a $P$-value. We consider 
    \begin{align*}
      \sup_{\ta\leq \ta_0} P_{\substack{Z| X \sim \mathrm{Tulap}(X,b,q)\\X\sim \mathrm{Binom}(n,\ta)}}(p(\ta,Z) \leq \al)
&= P_{\substack{Z| X \sim \mathrm{Tulap}(X,b,q)\\X\sim \mathrm{Binom}(n,\ta_0)}}(p(\ta_0,Z)\leq \al)\\
    \end{align*}
using the fact that $X$ has a monotone likelihood ratio in $\ta$. Note that $p(\ta_1,Z) = F_{-(X+N)}(-Z)$ where $N\sim \mathrm{Tulap}(0,b,q)$ and $X\sim \mathrm{Binom}(n,\ta_0)$. But, then when $\ta=\ta_0$, $-Z \overset d = -(X+N)$. So, $p(\ta_0,Z) = F_{-Z}(-Z) \sim \mathrm{Unif}(0,1)$. So, 
\[P_{\substack{Z| X \sim \mathrm{Tulap}(X,b,q)\\X\sim \mathrm{Binom}(n,\ta_0)}}(p(\ta_0,Z)\leq \al)
=P_{U\sim \mathrm{Unif}(0,1)}( U\leq \al)=\al\]
\item Let $N\sim \mathrm{Tulap}(0,b,q)$, and recall from Theorem \ref{UMP2} that the UMP-$\al$ test for $H_0: \ta\leq \ta_0$ versus $H_1: \ta>\ta_0$ is $\phi_x^* = F_N(x-m)$, where $m$ satisfies $E_{\ta_0}\phi^*_x =\al$. We can write $\phi^*$ as 
  \begin{align*}
    \phi^*_x = F_N(x-m)
&= P_{N\sim \mathrm{Tulap}(0,b,q)}(N\leq X-m\mid X)\\
&= P_N(X+N\geq m\mid X)
= P_{Z| X \sim \mathrm{Tulap}(X,b,q)} (Z\geq m\mid X)
  \end{align*}
where $m$ is chosen such that 
\begin{align*}
\al = E_{\ta_0} \phi^*_x
 &= E_{x\sim \ta_0} P_{Z| X \sim \mathrm{Tulap}(X,b,q)}(Z\geq m \mid X) \\
&= P_{\substack{Z| X\sim \mathrm{Tulap}(X,b,q)\\X\sim \mathrm{Binom}(n,\ta_0)}} (Z\geq m)
=1-F_Z(m),
\end{align*}
where $F$ is the cdf of the marginal distribution of $Z$, where $Z| X \sim \mathrm{Tulap}(X,b,q)$ and $X\sim \mathrm{Binom}(n,\ta_0)$. From this equation, we have that $m$ is the $(1-\al)$-quantile of the marginal distribution of $Z$. 

Let $R| X \sim \mathrm{Bern}(\phi^*_X)$ and $Z|X\sim \mathrm{Tulap}(X,b,q)$. Then 
\begin{align*}
  R| X\overset d = I(Z\geq m) \mid X&\overset d= I\l(1-\al \leq F_{\substack{X' \sim \mathrm{Binom}(n,\ta_0)\\ Z'| {X'}\sim \mathrm{Tulap}(n,b,q)}} (Z)\r) \Big | {X}\\
&\overset d= I\l( P_{\substack{X'\sim \mathrm{Binom}(n,\ta_0)\\ N\sim \mathrm{Tulap}(0,b,q)}}(X'+N \geq Z\mid Z)\leq \al\r)\Big|X\\
\end{align*}
Taking the expected value over $R| X$ and $Z| X$ of both sides gives
\[\phi^*_X = E(R\mid X) = P_{Z| X\sim \mathrm{Tulap}(X,b,q)}(p(\ta_0,Z)\leq \al\mid X).\]
\item We can express $p(\ta_0,Z)$ in the following way:
  \begin{align*}
    p(\ta_0,Z)&= P_{\substack{X\sim \mathrm{Binom}(n, \ta_0)\\ N\sim \mathrm{Tulap}(0,b,q)}} (X+N \geq Z)
= P_{X,N}(-N\leq X-Z)\\
&= E_{X\sim \mathrm{Binom}(n, \ta_0)} P_{N}(N\leq X-Z\mid X)
= E_{X\sim \mathrm{Binom}(n,\ta_0)}F_N(X-Z)\\
&= \sum_{x=0}^n F_N(x-Z)\binom nx \ta_0^x(1-\ta_0)^{n-x},
  \end{align*}
which is just the inner product of the vectors $\ul F$ and $\ul B$ in algorithm \ref{PValueAlgorithm}.\qedhere
  \end{enumerate}
\end{proof}

\end{document}